\theoremstyle{plain}
\newtheorem{theorem}{Theorem}
\newtheorem{lemma}{Lemma}
\newtheorem{assume}{Assumption}
\newtheorem{remark}{Remark}
\newtheorem{definition}{Definition}
\begin{document}

\title{Pseudo-Bayesian Quantum
Tomography with Rank-adaptation}
\author{The Tien Mai
\footnote{Corresponding author. Email:  thetien.mai@ensae.fr   \&  pierre.alquier@ensae.fr}
\quad \&  Pierre Alquier
\\
{\small  CREST, ENSAE, Universit\'e Paris Saclay}
\\
{\small 3 av. Pierre Larousse,
92245 Malakoff CEDEX, France}
}

\maketitle

\begin{abstract}
Quantum state tomography, an important task in quantum
information processing, aims at reconstructing a state
from prepared measurement data. Bayesian methods are
recognized to be one of the good and reliable choices in
estimating quantum states~\cite{blume2010optimal}.
Several numerical works showed that Bayesian
estimations are comparable to, and even better than
other methods in the problem of $1$-qubit state
recovery. However, the problem of choosing prior
distribution in the general case of $n$ qubits is not
straightforward. More importantly, the statistical
performance of Bayesian type estimators have not
been studied from a theoretical perspective yet. In this
paper, we propose a novel prior for quantum states
(density matrices), and we define pseudo-Bayesian
estimators of the density matrix. Then, using
PAC-Bayesian theorems~\cite{catonibook}, we
derive rates of convergence for the posterior mean.
The numerical performance of these estimators are
tested on simulated and real datasets.
\end{abstract}

\section{Introduction}

Playing a vital role in quantum information
processing, as well as being fundamental for
characterizing quantum objects, quantum state
tomography focuses on reconstructing the (unknown)
state of a physical quantum system~\cite{MR2181445bookParis},
 usually represented by the so-called density matrix
$\rho$ (the exact definition of a density matrix
is given in Section~\ref{set-up}).
This task is done by using outcomes of measurements performed on
many independent systems identically prepared in the
same state.

The 'tomographic' method, also named as linear/direct
inversion~\cite{vogel1989determination,vrehavcek2010operational},
is the simplest and oldest estimation procedure. It is actually
the analogous of the least-square estimator in the quantum setting.
Although
easy in computation and providing unbiased
estimate~\cite{PhysRevLett.114.080403},
it does not generate a physical density matrix as an
output~\cite{shang2014quantum}. Maximum likelihood
estimation~\cite{hradil20043} is the current procedure
of choice. Unfortunately, it has some critical
flaws detailed in~\cite{blume2010optimal}, including
a huge computational complexity. Furthermore,
both these methods are not adaptive to the case where
a system is in a state $\rho$ for which some
additional information is available. Note especially that,
physicists focus on so-called pure states,
for which ${\rm rank}(\rho)=1$.

The problem of rank-adaptivity was tackled thanks to
adequate penalization. Rank-penalized maximum
likelihood (BIC) was introduced
in~\cite{guctua2012rank} while a rank-penalized
least-square estimator $\hat{\rho}_{{\rm rank-pen}}$
was proposed in~\cite{alquier2013rank}, together with
a proof of its consistency. More specifically, when
the density matrix of the system is $\rho^0$ with
$ r = {\rm rank}(\rho^0) $, the authors
of \cite{alquier2013rank} proved that the
Frobenius norm of the estimation error satisfies
$\|\hat{\rho}_{{\rm rank-pen}} - \rho^0 \|_F^2
=\mathcal{O}(r 4^n /N)$ where $N$ is the number
of quantum measurements. The rate was improved to
$\mathcal{O}(r3^n/N)$ by~\cite{butucea2015spectral},
using a thresholding method. Note that the rate
$\mathcal{O}(r2^n/N)$ was first claimed in the paper,
but in the Corrigendum~\cite{butucea2016corrigendum},
the authors acknowledge that this is not the case.
The paper however contains a proof that
no method can reach a rate smaller than $r2^n / N$.
So, the minimax-optimal rate is somewhere in between
$r 2^n / N$ and $r3^n / N$.

Note that all the aforementioned papers
only cover the complete measurement case
(the definition is given in Section~\ref{set-up},
basically it means that we have observations for all the
observables given by the Pauli basis).
The statistical relationship between matrix completion
and quantum tomography with incomplete measurements (in
the Le Cam paradigm) has been investigated in~\cite{wang2013}.
Thus compressed sensing ideas have been successfully
proposed in estimating a density state from incomplete
measurements~\cite{gross2010quantum,gross2011recovering,
flammia2012quantum,koltchinskii2011}.

On the other hand, Bayesian estimation has been
considered in this context. The papers~\cite{buvzek1998reconst,
Baier_comparison} compare Bayesian methods to other methods
on simulated data. More recently,~\cite{kravtsov2013experimental,
ferrie2014quantum,ferrie2015have,schmied2014quantum}
discuss efficient algorithms for computing Bayesian estimators.
Importantly, \cite{blume2010optimal}
showed that Bayesian method comes with natural error bars and is
the most accurate scheme w.r.t. the expected error
(operational divergence) (even) with finite samples.
However, there is no theoretical guarantee on the
convergence of these estimators.

More works on quantum state tomography in various settings
include~\cite{audenaert2009quantum,carlen2010trace,RauGibbsstate,rau2014Gibbstate,
ferrie2014likelihood_free}.

In this paper, we consider a pseudo-Bayesian estimation,
where the likelihood is replaced by pseudo-likelihoods
based on various moments (two estimators,
corresponding to two different pseudo-likelihood, are
actually proposed). Using PAC-Bayesian
theory~\cite{STW,McA,catoni2004statistical,catonibook,
dalalyan2008aggregation,suzuki2012pac},
we derive oracle inequalities for the pseudo-posterior
mean. We obtain rates of convergence for these
estimators in the complete measurement setting.
One of them has a rate as good as the best
known rate up to date $\mathcal{O}( {\rm rank}(\rho^0) 3^n / N)$
(still, the other one is interesting for computationnal reasons
that are discussed in the paper).

The rest of the paper is organized as follow.  We recall
the standard notations and basics about quantum theory
in Section \ref{set-up}. Then the definition of the prior
and of the estimators are presented in Section~\ref{Def Prior}.
The statistical analysis of the estimators are given in
Section~\ref{MAIN}, while all the proofs are delayed to
the Appendix~\ref{Appendix}. Some numerical
experiments on simulated and real datasets are given in
Section~\ref{num}.

\section{Preliminaries}
\label{set-up}

\subsection{Notations}

A very good introduction to the notations and problems of
quantum statistics is given in~\cite{artiles}. Here, we
only provide the basic definitions required for the paper.

In quantum physics, all the information on the physical
state of a system can be encoded in its {\it density
matrix} $\rho$. Depending on the system in hand, this
matrix can have a finite or infinite number of entries.
A two-level system of $n$-qubits is represented by
a $2^{n}\times 2^{n}$ density matrix $\rho$, with
coefficients in $\mathbb{C}$. For the sake of simplicity,
the notation $d=2^n$ is used in~\cite{butucea2015spectral},
so note that $\rho$ is a $d\times d$ matrix.
This matrix is Hermitian
$\rho^\dagger=\rho$ (i.e. self-adjoint), semidefinite
positive $\rho\geq 0$ and has ${\rm Trace}(\rho)=1$.
Additionally, it often
makes sense to assume that the rank of $\rho$ is
small \cite{gross2010quantum,gross2011recovering}.
In theory, the rank can be any integer between $1$ and
$2^n$, but physicists are especially interested in pure
states and a pure state $ \rho $ can be defined by
${\rm rank}(\rho)=1$.

The objective of quantum tomography
is to estimate $\rho$ on the basis  of
experimental observations of many independent
and identically systems prepared in the state $\rho$ by
the same experimental device.

For each particle (qubit), one can measure one of
the three Pauli observables $\sigma_x, \, \sigma_y, \, \sigma_z$.
The outcome for each will be $ 1 $, or $ -1 $, randomly
(the corresponding probability depends on the state
$\rho$ and will be given in~\eqref{prob.fomala} below).
Thus for a $ n $-qubits system, we consider $3^n$
possible experimental observables. The set of all
possible performed observables is
\begin{align*}
\{\sigma_{\mathbf{a}} = \sigma_{{a}_1} \otimes
\ldots \otimes \sigma_{{a}_n}; \,  \mathbf{a} =
(a_1,\ldots,a_n) \in \mathcal{E}^n := \{x,y,z\}^{n}\},
\end{align*}
where vector $\mathbf{a} $ identifies the experiment.
The outcome for each fixed observable
setting will be a random vector
$ \mathbf{s} = (s_1, \ldots , s_n)   \in \mathcal{R}^n :=\{-1,1\}^{n}$,
thus there are  $ 2^n $ outcomes in total.

Let us denote $R^{\mathbf{a}}$ a $\mathcal{R}^n$-valued
random vector that is the outcome of an experiment
indexed by $\mathbf{a}$.
From the basic principles of quantum mechanics
(Born's rule), its probability distribution is given by
\begin{equation}
\label{prob.fomala}
\forall \mathbf{s} \in \mathcal{R}^n,
p_{\mathbf{a},\mathbf{s}} := \mathbb{P}
 (R^\mathbf{a}= \mathbf{s})
  = {\rm Trace} \left(\rho \cdot
 P_{\mathbf{s}}^{\mathbf{a}} \right),
\end{equation}
where  $ P_{\mathbf{s}}^{\mathbf{a}} :=
P_{s_1}^{a_{1}}\otimes \dots \otimes P_{s_n}^{a_n}$ and
$P_{s_i}^{a_i}$ is the orthogonal
projection associated to the eigenvalue $s_i$ in
the diagonalization of $ \sigma_{a_i} $
for $a_i\in \{x,y,z\}$ and $ s_i\in \{-1,1\} $ --
that is  $ \sigma_{a_i} = -1P^{a_i}_{-1}+1P^{a_i}_{+1}$.

The quantum state tomography problem is as follows: a physicist
has access to an experimental device that produces $n$-qubits in
a state $\rho^0$, and $\rho^0$ is assumed to be unknown. He/she
can produce a large number of replications of the $n$-qubits and
wants to infer $\rho^0$ from this.

In the complete measurement case, for {\it each}
experiment setting $ \mathbf{a}\in\mathcal{E}^n$, the experimenter
repeats $ m $ times the experiment corresponding to $\mathbf{a}$
and thus collects $m$ independent random copies of $R^\mathbf{a}$,
say $R^\mathbf{a}_1,\dots,R^\mathbf{a}_m$. As there are $3^n$
possible experiment settings $ \mathbf{a}$, we define the
\textbf{quantum sample} size as $ N:=m\cdot 3^n $. We will refer to
$(R^\mathbf{a}_i)_{i\in\{1,\dots,m\},\mathbf{a}\in\mathcal{E}^n}$
as $\mathcal{D}$ (for data).

Note that the case where we would only have access to experiments
$ \mathbf{a}\in\mathcal{A}$ where $\mathcal{A}$ is some proper
subset of $\mathcal{E}^n$ ($A \varsubsetneq \mathcal{E}^n$) is referred
to as the incomplete measurement case. In this paper, we focus on
the complete measurement case, but the extension to the incomplete
case is discussed in Section~\ref{conclusion}.

\subsection{Popular estimation methods}

A natural idea is to define the empirical frequencies
$$ \hat{p}_{\mathbf{a},\mathbf{s}}
= \frac{1}{m}\sum_{i=1}^m \mathbf{1}_{\{R_i^\mathbf{a}=\mathbf{s}\}}.
$$
Note that $ \hat{p}_{\mathbf{a},\mathbf{s}}$ is an unbiased estimator
of the probability $ p_{\mathbf{a},\mathbf{s}} $.
The inversion method is based on solving the linear
system of equations
\begin{equation} \label{rhohat}
\left\{ \begin{array}{l}
\hat p_{\mathbf{a},\mathbf{s}} = {\rm Trace}
\left(\hat {\rho} \cdot
 P_{\mathbf{s}}^{\mathbf{a}} \right),
 \\
 \mathbf{a}\in\mathbb{E}^n,
 \\
 \mathbf{s} \in \mathcal{R}^n.
 \end{array}
 \right.
\end{equation}
As mentioned above, the computation of $\hat{\rho}$ is quite
straighforward. Explicit formulas are classical, see
e.g.~\cite{alquier2013rank}.

Another commonly used method is maximum likelihood (ML) estimation,
where the likelihood is
\begin{equation*}
\mathcal{L}(\rho;\mathcal{D})  \propto \prod_{\mathbf{a}\in\mathbb{E}^n}
 \prod_{\mathbf{s} \in \mathcal{R}^n}  [{\rm Trace} \left(\rho \cdot
 P_{\mathbf{s}}^{\mathbf{a}} \right)]^{ n_{\mathbf{a},\mathbf{s}} },
\end{equation*}
where $n_{\mathbf{a},\mathbf{s}} = m \hat{p}_{\mathbf{a},\mathbf{s}}$
is the number of times we observed output $\mathbf{s}$ in experiment
$\mathbf{a}$ (obviously, $\sum_{\mathbf{s}} n_{\mathbf{a},\mathbf{s}} = m$).
As mentioned in the introduction,
both methods suffer many drawbacks. The inversion method returns
a matrix $\hat{\rho}$ that usually does not satisfy the axioms of
a density matrix. ML becomes expensive (inpractical) for
$ n \geq 10 $. Moreover, these
two methods can not take advantage of a prior knowledge
(e.x. low-rank state).

Considering the expansion of the density matrix $ \rho $
in the $n$--Pauli basis, i.e. $ \mathcal{B}= \{ \sigma_b =
\sigma_{{b}_1} \otimes \ldots \otimes \sigma_{{b}_n}, b
\in \{ I,x,y,z \}^n \}, \sigma_I = I ,$
\begin{align}
\label{Pauli expansion}
\rho = \sum_{b\in \{ I,x,y,z \}^n }\rho_b \sigma_b.
\end{align}
One can also estimate the density matrix via
estimating the coefficients in the Pauli expansion.
This was studied in~\cite{cai2015Pauli} where the authors
also make a sparsity assumption: that is, most of $\rho_b $
are small or very close to $0$. Note that, this is not related
to the setting we explore (low-rank assumption).

We now turn to the definition of a prior distribution
on density matrices that will allow to perform
(pseudo-)Bayesian estimation.

\section{Pseudo-Bayesian estimation and prior distribution on density matrices}
\label{Def Prior}

\subsection{Peudo-Bayesian estimation}

We remind that the idea of Bayesian statistics is to encode the
prior information on density matrices through a prior distribution
$\pi({\rm d}\rho)$. Inference is then done through the posterior
distribution $\pi({\rm d}\rho|\mathcal{D})
\propto \mathcal{L}(\rho) \pi({\rm d}\rho)$. Here, for computational
reasons, we replace the likelihood by a pseudo-likelihood.
This is an increasingly popular method in Bayesian
statistics~\cite{bissiri2013general}
and in machine
learning~\cite{catonibook,alquier2015properties,begin2016pac}.
We define
the pseudo-posterior by
\begin{align}
\tilde{\pi}_{\lambda}({\rm d}\nu)
\propto \exp\left[-\lambda \ell(\nu,\mathcal{D}) \right]
 \pi({\rm d}\nu),
 \label{pseudo_post}
\end{align}
the pseudo-likelihood being $\exp\left[-\lambda \ell(\nu,\mathcal{D}) \right]$.
The term $\ell(\nu,\mathcal{D}) $ can be specified by the user. Two examples
are provided in Section~\ref{MAIN}.
As a replacement of the likelihood, this term plays the role of the empirical
evidence. More specially
\begin{itemize}
 \item the role of $\exp\left[-\lambda \ell(\nu,\mathcal{D}) \right]$
 is to give more weight to the density $\nu$  when it
fits the data well;
 \item the role of $\pi({\rm d}\nu)$, the prior, is to
 restrict the posterior to the space of densities
 (and even give more weight to low-rank matrices if needed);
 \item $\lambda>0$ is a free parameter that allows to tune the balance
 between evidence from the data and prior information.
\end{itemize}
We finally define the pseudo-posterior mean (also refered to as
Gibbs estimator, PAC-Bayesian estimator or EWA, for exponentially weighted
aggregate~\cite{catonibook,dalalyan2008aggregation}):
\begin{align*}
\tilde{\rho}_{\lambda}
= \int \nu \tilde{\pi}_{\lambda}(d\nu).
\end{align*}
The definition of the estimator $\tilde{\rho}_{\lambda} $
based on the pseudo-posterior
$\tilde{\pi}_{\lambda}$ is actually validated by the
theoretical results from Section~\ref{MAIN}.

\subsection{Definition of the prior}

In the single qubit state estimation $ n=1$, the
representation of the quantum constraints is
explicit \cite{Baier_comparison,schmied2014quantum}.
Thus, one can place a prior distribution on the polar
reparametrization of the density. Up to our knowledge,
this has not been extended to the case $n>1$, and this
extension seems not straightforward.
For general n-qubit densities,
uninformative priors (e.g the Haar measure) are put
on $ \psi_{d\times K} $ matrices ($K\geq d$) and the
density state is built by $ \rho = \psi_{d\times K}
\psi^{\dagger}_{d\times K} $
 \cite{struchalin2016experimental,granade2015practical,
 huszar2012adaptive,ferrie2015have,zyczkowski2011generating}.
 One could also define a prior on the coefficients $\{\rho_b\}$
 of $\rho$ on the Pauli basis.
Nevertheless, none of these approaches seem helpful for
rank adaptation.

The idea for our prior is inspired by the priors used for low-rank
matrix estimation in machine learning, e.g.~\cite{mai2015,cottet20161}
and the references therein. Hereafter, we describe in details
the prior construction.

Let $V$ be a  vector in  $ \mathbb{C}^{d\times 1}\setminus \{\mathbf{0}\}$
($ d=2^n $ in our model), then $VV^{\dagger}$ is
a Hermitian, semi-definite positive matrix in
$ \mathbb{C}^{d\times d}$ with ${\rm rank}(VV^{\dagger}) = 1$.
Additionally, we can normalize $V$ (that is replace
$V$ by $V/\|V\|$), this lead to ${\rm Trace}
(VV^{\dagger}) = 1$. So, $VV^{\dagger}$
 satisfies the conditions of a density matrix
 (with rank-$1$).

Now, let $V_1, \ldots, V_d$ be $d$ normalized vectors
in $\mathbb{C}^{d\times 1}\setminus \{\mathbf{0}\}$ and $\gamma_1,\ldots
,\gamma_d$ be non-negative weights with
$ \sum_{j=1}^d\gamma_j =1$. Put
\begin{equation}
\label{formula}
\nu = \sum_{i=1}^d \gamma_i V_i V_i^{\dagger} .
\end{equation}
Then $ \nu $ is clearly a density matrix:
it is Hermitian (as a sum of Hermitian matrices), it is
semi-definite positive (same reason) and
$$ {\rm Tr}(\nu) = \sum_{i=1}^d \gamma_i {\rm Tr}
(V_i V_i^{\dagger}) =1 .$$

Moreover, note that any density matrix can be written
in such way, as we know that for any density matrix $\rho$,
\begin{equation}
\label{formula-diag}
\rho= U \Lambda U^{\dagger}
\end{equation}
and just write $U=(U_1|\dots|U_d)$ with
 the $U_i$'s being {\it orthogonal},
 where  $ \Lambda = {\rm diag}
(\Lambda_1, \ldots, \Lambda_n): \Lambda_1
\geq \ldots \geq \Lambda_n \geq 0,
\sum_{i = 1}^d  \Lambda_i = 1$.

The only difference in~(\ref{formula}) is that we do not
require that the $V_i$'s are orthogonal. Thus, it is
easier to simulate a matrix $\rho$ by simulating the $V_i$'s
and $\gamma_i's$ in~\eqref{formula} than by simulating
$U$ and $\Lambda$ in~\eqref{formula-diag}. Also, note that the
$\gamma_i$'s are not necessarily the eigenvalues of $\rho$.

\begin{definition}
We define the prior definition on $\rho$, $\pi({\rm d}\rho)$,
by
\begin{align*}
 V_1,\ldots, V_d & \sim \text{ i.i.d}  \text{ uniform distribution on
 the unit sphere,} \\
 (\gamma_1, \ldots,\gamma_d) & \sim \mathcal{D}ir(\alpha_1,\dots,\alpha_d),
 \\
 \rho & = \sum_{i=1}^d \gamma_i V_i V_i^{\dagger}
\end{align*}
where $\mathcal{D}ir(\alpha_1,\dots,\alpha_d)$ is the Dirichlet distribution
with parameters $\alpha_1,\dots,\alpha_d>0$.
\end{definition}

\begin{remark}
To get an approximate rank-$1$ matrix $\rho$, one can
take  all parameters of the Dirichlet distribution equal
to a constant that is very closed to 0 (e.g
$ \alpha_1=\ldots=\alpha_d=\frac{1}{d} $). And a typical
drawing will lead to one of the $\gamma_i's $ close to $1 $
and the others close to $0 $. See~\cite{wallach2009rethinking} for more
discussion on choosing the parameters for Dirichlet
distribution. Theoretical recommendations for the
$ \alpha_i $'s are given in Section~\ref{MAIN} below.
\end{remark}

\begin{remark}
We could impose the $V_i$'s to be orthogonal in practice. The
theoretical results would be unchanged, however, the implementation
of our method would become trickier. Note that to sample from the
uniform distribution on the sphere is rather easy. We can for example
simulate $\tilde{V}_i$ from any isotropic distribution, e.g.
$\mathcal{N}(0,\mathbb{I})$ and define $V_i := \tilde{V}_i / \|\tilde{V}_i\|$.
\end{remark}

\section{PAC-Bayesian estimation and analysis}
\label{MAIN}

\subsection{Pseudo-likelihoods}
Here, we consider two natural ways to compare a theoretical
density $\rho $ and the observations: first $p_{\mathbf{a},\mathbf{s}} $ should be close to the empirical part
$\hat{p}_{\mathbf{a},\mathbf{s}} $; second $\rho $ should be
close to the least square (invert) estimator $\hat{\rho} $.
As we have no reason to prefer one in advance, we define and
study $2 $ estimators.
\subsubsection*{a) Distance between the probabilities: prob-estimator}
We consider
$$ \ell^{prob}(\nu,\mathcal{D}) = \sum_{\mathbf{a}\in\mathcal{E}^n}
\sum_{\mathbf{s}\in \mathcal{R}^n}
\left[{\rm Tr}(\nu P_\mathbf{s}^\mathbf{a})
-\hat{p}_{\mathbf{a},\mathbf{s}}\right]^2$$
and
\begin{align*}
\tilde{\rho}^{prob}_{\lambda}
&= \int \nu \tilde{\pi}^{prob}_{\lambda}(d\nu),
\\
\tilde{\pi}^{prob}_{\lambda}({\rm d}\nu)
& \propto \exp\left[-\lambda \ell^{prob}(\nu,\mathcal{D})
 \right] \pi({\rm d}\nu).
\end{align*}
Note that if we use the shortened notation
$p_\nu=[{\rm Tr}(\nu P_\mathbf{s}^
\mathbf{a})]_{\mathbf{a},\mathbf{s}}$ and
$\hat{p}=[\hat{p}_{\mathbf{a},
\mathbf{s}}]_{\mathbf{a},\mathbf{s}}$
 then
$$ \ell^{prob}(\nu,\mathcal{D}) = \| p_\nu -
\hat{p}  \|^2_F $$
(Frobenius norm).
This distance quantifies how far the probabilities and the empirical
frequencies in the sample are.

\subsubsection*{b) Distance between the density matrices: dens-estimator}
Now, let us take:
$$ \ell^{dens}(\nu,\mathcal{D}) =
 \|\nu-\hat{\rho}\|_F^2. $$
and
\begin{align*}
\tilde{\rho}^{dens}_{\lambda}
&= \int \nu \tilde{\pi}^{dens}_{\lambda}(d\nu),
\\
\tilde{\pi}^{dens}_{\lambda}({\rm d}\nu)
& \propto \exp\left[-\lambda \ell^{dens}(\nu,
\mathcal{D}) \right] \pi({\rm d}\nu).
\end{align*}
In another words, this estimator finds a balance between
prior information and closeness to the least square
estimate $\hat{\rho}$. From a computational point of
view, this estimator is easier to implement than the
previous estimator.

\subsection{Statistical properties of the estimators}

\begin{assume}
\label{A1}
Fix some constants $D_1>0$ and $D_2>0$ (that do not depend on
$m$ nor $n$).
We assume that the parameters of the Dirichlet
prior distribution
$ \mathcal{D}ir(\alpha_1,\dots,\alpha_d) $ satisfy
\begin{itemize}
 \item $\forall i = 1,\ldots,d : \alpha_i \leq 1$,
 \item $  \sum_{i=1}^d\alpha_i =D_1$,
 \item  $ \prod_{i=1}^d \alpha_i \geq e^{-D_2d \log(d)} $.
\end{itemize}
\end{assume}

Note that this assumption is satisfied for
$ \alpha_1 = \ldots = \alpha_d = 1/d$ with
$ D_1 = D_2 = 1$.

The first theorem provides the concentration bound on
the square error of the first estimator
 $\tilde{\rho}^{prob}_{\lambda}$.
The proof of this theorem is left to the
\hyperref[Appendix]{appendix}.

\begin{theorem}
\label{thrm: rate 1}
Fix a small $\epsilon \in (0,1) $.
Under \hyperref[A1]{Assumption 1}, for
$ \lambda = \lambda^*: =m/2 $, with probability
at least $ 1-\epsilon$, one has
\begin{align*}
 \| \tilde{\rho}^{prob}_{\lambda^*} - \rho^0\|_F^2
 \leq
  C^{prob}_{D_1,D_2}\frac{   3^n {\rm rank}(\rho^0)
  \log\left(
\frac{{\rm rank}(\rho^0) N}{2^n}
\right)
 + (1.5)^n\log(2/\epsilon)}{ N},
\end{align*}
where $ C^{prob}_{D_1,D_2}$ is a constant that depends only
on $D_1,D_2$.
\end{theorem}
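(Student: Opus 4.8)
The plan is to use the PAC-Bayesian machinery to control $\|\tilde{\rho}^{prob}_{\lambda}-\rho^0\|_F^2$ through the empirical loss $\ell^{prob}$. The starting point is a standard PAC-Bayesian oracle inequality (in the spirit of Catoni), which bounds the integrated risk of the pseudo-posterior by an infimum over all ``comparison'' distributions $\rho$ of the empirical loss plus a Kullback--Leibler complexity term $\mathrm{KL}(\rho\,\|\,\pi)/\lambda$, up to fluctuation terms of the empirical process. Concretely, I would first establish that the quantity $\ell^{prob}(\nu,\mathcal{D})=\|p_\nu-\hat{p}\|_F^2$ concentrates around its expectation $\|p_\nu-p_{\rho^0}\|_F^2$, and that the latter is comparable to $\|\nu-\rho^0\|_F^2$. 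This last point is essential: because the Pauli projections $P_{\mathbf{s}}^{\mathbf{a}}$ form (after normalization) a tight frame / orthonormal-type family, the map $\nu\mapsto p_\nu$ is an isometry up to an explicit factor, so that $\|p_\nu-p_{\rho^0}\|_F^2 = c_n\|\nu-\rho^0\|_F^2$ for some constant $c_n$ scaling like a power of $2$ or $3$. Tracking this constant $c_n$ exactly is what will ultimately produce the $3^n$ factor in the final rate.

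The second ingredient is the variance/concentration control of the empirical frequencies. Since each $\hat{p}_{\mathbf{a},\mathbf{s}}$ is an average of $m$ i.i.d.\ Bernoulli indicators, I would use a bounded-differences or Bernstein-type argument to show that the empirical process $\nu\mapsto \ell^{prob}(\nu,\mathcal{D})-\mathbb{E}\ell^{prob}(\nu,\mathcal{D})$ is uniformly small, with fluctuations of order governed by $m=N/3^n$ and by the number of terms $3^n\cdot 2^n=6^n$ in the sum. This step explains the appearance of $(1.5)^n=(6^n/4^n)$-type geometric factors multiplying $\log(2/\epsilon)$: they come from bounding the deviation of $6^n$ summands each of which carries a $2^{-n}$-scale contribution after the isometry normalization. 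The choice $\lambda^*=m/2$ is then dictated by optimizing the PAC-Bayesian bound so that the data-fit term and the fluctuation term balance, exactly as in the Catoni calculus where $\lambda$ is tuned to the inverse of the loss's sub-Gaussian range.

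The third and most delicate ingredient is the bound on the KL complexity term. Here I would not compare $\pi$ against a point mass (which would give infinite KL), but against a carefully chosen distribution $\rho_0$ concentrated in a small neighborhood of $\rho^0$ and supported on matrices of rank at most $r=\mathrm{rank}(\rho^0)$. Using the prior's construction $\rho=\sum_i\gamma_i V_iV_i^\dagger$, I would take $\gamma$ concentrated (via the Dirichlet mass near a vertex of the simplex) so that only $r$ of the weights are appreciable, and perturb the corresponding $V_i$ within small spherical caps around the eigenvectors of $\rho^0$. Estimating $\mathrm{KL}(\rho_0\,\|\,\pi)$ then reduces to controlling (i) the Dirichlet log-density near the chosen vertex, where Assumption~1 --- in particular $\prod_i\alpha_i\geq e^{-D_2 d\log d}$ and $\sum_i\alpha_i=D_1$ --- is precisely what keeps this contribution of order $d\log d=2^n\cdot n$, and (ii) the cost of localizing $r$ unit vectors on the $(2d-1)$-dimensional real sphere, which costs $\mathcal{O}(r\cdot d\log(1/\delta))$ for a cap of radius $\delta$. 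Choosing the cap radius $\delta\sim \sqrt{2^n/(rN)}$ to balance the localization bias against the complexity yields the logarithmic factor $\log\bigl(rN/2^n\bigr)$ and the leading term $r\,3^n\log(rN/2^n)/N$.

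The main obstacle, I expect, will be step three: producing a sufficiently concentrated yet absolutely-continuous distribution $\rho_0\ll\pi$ whose KL divergence is genuinely $\mathcal{O}(r\,2^n\log(N))$ rather than $\mathcal{O}(2^n\cdot 2^n)$. The non-orthogonality of the $V_i$'s in the prior (flagged in Remark~2) means $\rho_0$ cannot simply be the image of the exact spectral decomposition, so I must quantify how a small spherical perturbation of the $V_i$'s propagates to $\|\rho-\rho^0\|_F$ and verify that the induced bias stays below the target rate. Once the bias--complexity trade-off and the frame-isometry constant $c_n$ are pinned down, the final inequality follows by assembling the oracle bound, substituting the constructed $\rho_0$, and collecting constants into $C^{prob}_{D_1,D_2}$.
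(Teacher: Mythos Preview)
Your proposal follows essentially the same route as the paper: Hoeffding-based exponential moment bounds (Lemmas~\ref{lem app bern 1}--\ref{lem3}), a Catoni-style PAC-Bayesian oracle inequality (Lemmas~\ref{PAC-bound:empirical}--\ref{bound1}), and a localized comparison distribution around $\rho^0$ built from Dirichlet weights near a vertex and small spherical caps for the $V_i$'s (Definition~\ref{definition_pihat} and Lemma~\ref{lemma_kl}). One correction worth making: the map $\nu\mapsto p_\nu$ is \emph{not} an isometry with a single constant $c_n$ as you state; the paper instead uses the two-sided frame inequality $2^n\|\nu-\rho^0\|_F^2\le\|p_\nu-p^0\|_F^2\le 6^n\|\nu-\rho^0\|_F^2$ from~\cite{alquier2013rank}, and it is precisely the ratio $6^n/2^n=3^n$ of the upper to the lower frame constant---not a single tracked $c_n$---that generates the $3^n$ factor in the final rate (see the passage from~\eqref{bound1-equation} to~\eqref{oracle main}).
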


\begin{remark}
As said in the introduction, the best known rate up-to-date in
this problem is $\frac{   3^n {\rm rank}(\rho^0)}{N}$, so our estimator
$ \tilde{\rho}^{prob}_{\lambda^*}$ reaches this rate (up to log terms).
This rate is actually $\left(\frac{3}{2}\right)^n
\frac{   rd}{N}$ and the best lower bound known in this case is
$\frac{rd}{N}$~\cite{butucea2015spectral} (we remind that $d=2^n$).
\end{remark}

The next theorem presents the square error bound of
the second estimator $\tilde{\rho}^{dens}_{\lambda}$.
Here again, see the appendix for the proof.

\begin{theorem}
\label{rate 2}
Fix a small $\epsilon \in (0,1) $.
Under \hyperref[A1]{Assumption 1}, for
$ \lambda = \lambda^*: = \frac{N}{5^{n}4}$, with probability
at least $ 1-\epsilon$,
\begin{equation}
\|\tilde{\rho}^{dens}_{\lambda^*} -\rho^0\|_F^2
\leq
C^{dens}_{D_1,D_2} \frac{10^n {\rm rank}(\rho^0) \log\left(
\frac{{\rm rank}(\rho^0) N}{2^n}
\right)
+ 5^n \log(2/\varepsilon)}{N}
\end{equation}
where $ C^{dens}_{D_1,D_2}$ is a constant that depends only
on $D_1,D_2$.
\end{theorem}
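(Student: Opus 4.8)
The plan is to follow the standard Catoni-style PAC-Bayesian route, reducing the problem in three moves: a convexity step, an exponential-moment (Gibbs variational) step, and a prior-mass step. Throughout write $R(\nu)=\|\nu-\rho^0\|_F^2$ for the quantity to be bounded. First, since $R$ is convex and $\tilde\rho^{dens}_{\lambda}=\int\nu\,\tilde\pi^{dens}_{\lambda}(\mathrm{d}\nu)$, Jensen's inequality gives $R(\tilde\rho^{dens}_{\lambda})\le\int R(\nu)\,\tilde\pi^{dens}_{\lambda}(\mathrm{d}\nu)$, so it suffices to control the pseudo-posterior average of $R$.

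The central observation is that the inversion estimator $\hat\rho$ is unbiased, $\mathbb{E}_{\mathcal{D}}[\hat\rho]=\rho^0$, and that the loss splits as $\ell^{dens}(\nu,\mathcal{D})=\|\nu-\hat\rho\|_F^2=R(\nu)-2\langle\nu-\rho^0,\hat\rho-\rho^0\rangle_F+\|\hat\rho-\rho^0\|_F^2$, where the last term does not depend on $\nu$ and hence cancels in the normalisation of $\tilde\pi^{dens}_{\lambda}$. Thus the only stochastic, $\nu$-dependent fluctuation is the centred linear noise term $f(\nu):=2\langle\nu-\rho^0,\hat\rho-\rho^0\rangle_F$. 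I would control its exponential moment: by Cauchy--Schwarz $|f(\nu)|\le 2\sqrt{R(\nu)}\,\|\hat\rho-\rho^0\|_F$, and a direct variance computation in the Pauli basis gives $\mathbb{E}_{\mathcal{D}}\|\hat\rho-\rho^0\|_F^2=\mathcal{O}(5^n/N)$ (this is where the scale $5^n$, and hence the calibration $\lambda^*=N/(5^n4)$, enters). Since $\hat\rho-\rho^0$ is a sum of independent bounded contributions, a Hoeffding-type argument yields $\mathbb{E}_{\mathcal{D}}\exp[\lambda f(\nu)]\le\exp[c\,\lambda^2 (5^n/N)\,R(\nu)]$, and the factor $4$ in $\lambda^*$ is exactly chosen so that for $\lambda\le\lambda^*$ the quadratic term is absorbed into a fraction of $R(\nu)$. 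Feeding this into the Donsker--Varadhan variational formula for $\tilde\pi^{dens}_{\lambda}$ (together with a Chernoff bound to pass to an event of probability $1-\epsilon$, which produces the additive $\log(2/\epsilon)/\lambda^*=\mathcal{O}(5^n\log(2/\epsilon)/N)$ term) gives the oracle inequality $\int R\,\mathrm{d}\tilde\pi^{dens}_{\lambda^*}\le C\inf_{\mu\ll\pi}\big[\int R\,\mathrm{d}\mu+(\mathrm{KL}(\mu\|\pi)+\log(2/\epsilon))/\lambda^*\big]$.

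It then remains to choose a good trial distribution $\mu$ and to bound the two terms in the infimum, which is the prior-mass computation. Writing $\rho^0=\sum_{i=1}^{r}\Lambda_i U_iU_i^\dagger$ with $r=\mathrm{rank}(\rho^0)$, I would take $\mu$ to be the prior $\pi$ conditioned to a small neighbourhood: localise each $V_i$ on the sphere near $U_i$ for $i\le r$ and let the Dirichlet weights concentrate near $(\Lambda_1,\dots,\Lambda_r,0,\dots,0)$. The approximation term $\int R\,\mathrm{d}\mu$ is then made as small as the other terms by shrinking the neighbourhood, while Assumption~1 (the lower bound on $\prod_i\alpha_i$ together with the control on $\sum_i\alpha_i$) is exactly what is needed to lower bound the prior mass of this neighbourhood, giving $\mathrm{KL}(\mu\|\pi)=\mathcal{O}\big(r\,d\,\log(rN/d)\big)$ with $d=2^n$. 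Dividing by $\lambda^*\sim N/5^n$ turns $r\,d=r\,2^n$ into the announced $r\,10^n\log(rN/2^n)$, and combining with the $\epsilon$-term and the Jensen step closes the proof.

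The main obstacle is the prior-mass / KL estimate in the last paragraph: one must simultaneously keep the localised trial measure $\mu$ tight enough that the approximation error $\int R\,\mathrm{d}\mu$ stays of the target order and loose enough that $\mathrm{KL}(\mu\|\pi)$ does not blow up, all while working with the non-orthogonal factored representation $\nu=\sum_i\gamma_i V_iV_i^\dagger$ and the joint geometry of the uniform-on-sphere and Dirichlet priors; getting the logarithmic factor and the exact $r\,2^n$ dependence right here is the delicate part, whereas the exponential-moment step is comparatively routine once the $5^n$ variance scale of $\hat\rho$ is established.
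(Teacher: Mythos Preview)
Your proposal is correct and follows essentially the same route as the paper: an exponential-moment bound on the linear fluctuation $\langle\nu-\rho^0,\hat\rho-\rho^0\rangle_F$ via Hoeffding on the Pauli-basis increments (the paper's Lemma~\ref{lem apply Bern}, giving the $(5/3)^n/m=5^n/N$ scale), then the Donsker--Varadhan/PAC-Bayes oracle inequality (Lemma~\ref{PACbound_2}), then the localised prior-mass/KL estimate (Lemma~\ref{lemma_kl}) and Jensen. One cosmetic remark: your Cauchy--Schwarz/variance-of-$\hat\rho$ sentence is only heuristic motivation for the $5^n$ scale and is not how the exponential moment is actually obtained---the paper, like your next sentence, gets it directly by bounding each increment $Y_{i,a}$ and applying Hoeffding, without passing through $\mathbb{E}\|\hat\rho-\rho^0\|_F^2$.
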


\begin{remark}
The guarantee for $\tilde{\rho}^{dens}_{\lambda^*}$ is far less satisfactory.
However, as this estimator is easier to compute, we think it is
interesting to provide a convergence rate, even if it is far from optimal:
note that for a fixed $d$, the bound goes to $0$ when $m\rightarrow \infty$.
\end{remark}

\begin{remark}
Experiments show that
$ \lambda = \lambda^*: = \frac{N}{5^{n}4}$
is actually not the best choice for dens-estimator.
The choice $ \lambda = \frac{N}{4}$ (heuristically motivated
by~\cite{dalalyan2008aggregation}) leads to results
comparable to the prob-estimator in Section
\ref{num}. This leads to the conjecture that the rate of
$ \tilde{\rho}^{dens}_{N/4} $ is much better than
$  \frac{10^n {\rm rank}(\rho^0) }{N} $
but this is still an open question.
\end{remark}

\section{Numerical Experiments}
\label{num}

\subsection{Metropolis-Hastings Implementation}
We implement the two proposed estimators via the
Metropolis-Hasting (MH) algorithm~\cite{robert2013monte}.
Note that to draw $  (\gamma_1, \ldots,\gamma_d)
\sim \mathcal{D}ir(\alpha,\dots,\alpha) $ is equivalent to
draw $ \gamma_i = Y_i/(Y_1+\ldots+Y_d) $ with
$ Y_i \overset{i.i.d}{\sim} Gamma(\alpha,1), \forall i = 1,\ldots,d $.
Thus, instead of $\gamma_i's $, we conduct a MH updating for $ Y_i's $.
So the objective is to produce a Markov chain
$(Y^{(t)}_1,\ldots,Y^{(t)}_d, V^{(t)}_1,\ldots, V^{(t)}_d) $.
From this, we deduce obviously the sequence $(\gamma^{(t)}_1,\ldots,\gamma^{(t)}_d, V^{(t)}_1,\ldots, V^{(t)}_d) $ and use the following empirical mean as the Monte-Carlo approximation of our estimator:
$$
\hat{\rho}^{{\rm MH}} := \frac{1}{T}\sum_{t=1}^T\left(  \sum_{i=1}^d \gamma_i^{(t)} V_{i}^{(t)} (V_{i}^{(t)})^\dagger\right).
$$

\begin{algorithm}
\caption{MH implementation}
For $ t$ from $ 1$ to $ T$, we iteratively update through the following steps:
\begin{description}
\item[updating for $Y_i's $:] for $i $ from $1 $ to $d $,
\\
Sample $\tilde{Y}_i  \sim h(y|Y^{(t-1)}_i) $
where $h$ is a proposal distribution given explicitely below.
\\
Calculate $\tilde{\gamma_i} = \tilde{Y}_i / (\sum_{i=1}^d\tilde{Y}_i) $.
\\
Set
$$ Y^{(t)}_i  =
\begin{cases}
\tilde{Y}_i    &\text{with probability }\min \left\{ 1,R(\tilde{Y}, Y^{(t-1)})\right\},
\\
 Y^{(t-1)}_i        & \text{otherwise} 
\end{cases} $$
where $R(\tilde{Y}, Y^{(t-1)})$ is the acceptance ratio given below.
\\
Put $\gamma_i^{(t)} =Y^{(t)}_i /(\sum_{j=1}^dY^{(t)}_j) , i = 1,\ldots,d$.

\item[updating for $V_i's $:] for $i $ from $1 $ to $d $,
\\
Sample $ \tilde{V}_i $ from the uniform distribution on
 the unit sphere.
\\
Set
$$ V^{(t)}_i  =
\begin{cases}
\tilde{V}_i    &\text{with probability }   \min \{ 1,A(V^{(t-1)}, \tilde{V}) \} ,
\\
 V^{(t-1)}_i        & \text{otherwise}  ,
\end{cases} $$
where $A(V^{(t-1)}, \tilde{V})$ is the acceptance ratio given below.
\end{description}
\end{algorithm}

Let us now give precisely $h$, $R$ and $A$.
We define $ h(\cdot|Y^{(t-1)}_i)$ as the probability distribution of $U = Y^{(t-1)}_i \exp(y)$ where $y \sim \mathcal{U}(-0.5,0.5) $.
Following~\cite{robert2013monte} the acceptance ratios are then given by:
\begin{align*}
\log(R( \tilde{Y}, Y^{(t-1)}))
&= \lambda \ell\left(\sum_{i=1}^d \tilde{\gamma_i}
 V_i V_i^{\dagger} ,\mathcal{D}\right)
- \lambda \ell\left(\sum_{i=1}^d \gamma^{(t-1)}_i
V_i V_i^{\dagger} ,\mathcal{D}\right)
\\
& + \sum_{i=1}^d((\alpha-1)\log(\tilde{Y}_i) -\tilde{Y}_i )
-
\sum_{i=1}^d((\alpha-1)\log(Y_i^{(t-1)}) -Y_i^{(t-1)})
\\
& + \sum_{i=1}^d
\tilde{Y}_i - \sum_{i=1}^d Y_i^{(t-1)}
\end{align*}
and
\begin{align*}
\log (A(V^{(t-1)}, \tilde{V})) 
= 
\lambda \ell\left(\sum_{i=1}^d \gamma_i
 \tilde{V}_i \tilde{V}_i ^{\dagger} ,\mathcal{D}\right)
- \lambda \ell\left(\sum_{i=1}^d \gamma_i
V^{(t-1)}_i (V^{(t-1)}_i)^{\dagger} ,\mathcal{D}\right)
\end{align*}
where $\ell(\cdot,\mathcal{D})$ stands for $\ell^{dens}(\cdot,\mathcal{D})$
or $\ell^{prob}(\nu,\mathcal{D})$ depending on the estimator we are
computing.

\subsection{Experiments}
We study the numerical performance
of the prob-estimators with $ \lambda = m/2 $, i.e.
$\tilde{\rho}_{m/2}^{prob}$ and the
dens-estimator with $ \lambda =  \frac{N}{4} $, i.e.
$\tilde{\rho}_{ N/4}^{dens}$ on the
following settings, all with $ n=2,3,4$ ($d=4,8,16 $):
\begin{itemize}
\item a pure state density (rank-$ 1$)
$\rho = \psi \psi^{\dagger}$ with $ \psi \in \mathbb{C}^{d\times 1} $,
\item a rank-$2 $ density matrix that
$ \rho_{rank-2} = \frac{1}{2}\psi_1 \psi_1^{\dagger}
+  \frac{1}{2}\psi_2 \psi_2^{\dagger}$ with
 $ \psi_1,\psi_2  $ being two normalized orthogonal
 vectors in $  \mathbb{C}^{d\times 1} $,

\item an ``approximate rank-$2$" density matrix:
$ \rho = w \rho_{rank-2} + (1-w)\frac{\mathbb{I}_d}{d}, w=0.98 $. Note that by ``approximate rank-$2$", we mean that
$\rho $ is very well approximated by a rank-$2$ matrix
$\rho_{rank-2} $ (in the sense that
$\|\rho- \rho_{rank-2} \|_F^2 $ is small), but in general
$\rho $ itself is full rank,

\item a maximal mixed state (rank-$d$).
\end{itemize}

The experiments are done for $ m=20;200;1000;2000 $.
The parameter for $\mathcal{D}ir(\alpha,\dots,\alpha) $
is $ \alpha=0.5 $. We repeat each experiment $10$ times,
and compute the mean of the square error, MSE,
 $\|\hat{\rho}-\rho\|_F^2$ for each estimator, together with 
 the associated standard deviation (between brackets
in Tables~\ref{tablen_4},\ref{tablen_3},\ref{tablen_2}).

\subsection{Results}

We compare the prob- and dens-estimator to the simple
inversion procedure and to the thresholding estimator
of \cite{butucea2015spectral}. The results are given in
Tables~\ref{tablen_4},\ref{tablen_3},\ref{tablen_2}
(outputs from the
\textbf{R} software). The conclusions are:
\begin{itemize}
\item The prob-estimator seems to be the most accurate
but also comes with a larger standard deviation. This
might be due to slow convergence of the MCMC
procedure. Indeed each step is computationally highly
expensive.
\item The dens-estimator is easier to compute and while it is
less accurate than the prob-estimator, it
still shows better results than the direct inversion
method.
\item The thresholding estimator of
\cite{butucea2015spectral} works well for rank-1
states but seems to bring too much bias for other states.
\end{itemize}

Besides the square error, the eigenvalues of the
estimates are also important when reconstructing
density matrices. In Figure~\ref{eigenvalues_simulated},
the dens-estimator returns with eigenvalues similar to
the true eigenvalues of the true density matrix, while
the prob-estimator seems not to shrink enough.

\begin{figure}[!ht]
\centering
\includegraphics[scale=.47]{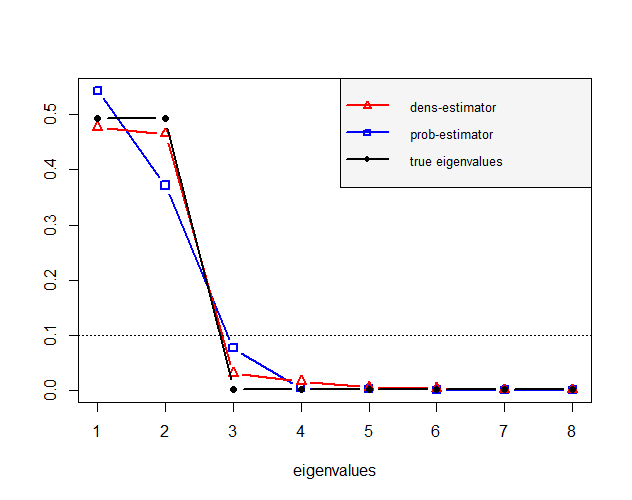}
\caption{Eigenvalues of estimates for an ``approximate
  rank-2" density with $ d=2^3, m = 200 $.}
\label{eigenvalues_simulated}
\end{figure}

\begin{table}[H]
\caption{MSEs for $ n=4 $
  (together with standard deviations)}
\label{tablen_4}
\begin{tabular}{| l | c | c | c | c |}
\hline\hline
                 & $m = 20$ & $m = 200$ & $m = 1000$ & $m = 2000$
\\
\hline
\multicolumn{5}{|c|}{pure state, MSEs$\times 10^{5}$}
\\
\hline
Inversion & 175 (4e-4) & 14.8 (2e-5) & 2.71 (8e-6) & 1.55 (5e-6)
\\
\hline
Thresholding & 93.5 (3e-4) & \textbf{12.6} (3e-5)  &\textbf{.596} (2e-6) & \textbf{.412} (2e-6)
\\
\hline
prob & 86.3 (6e-4) & 22.4 (2e-4)  & 10.5 (6e-5)  & 5.13 (2e-5)
\\
\hline
dens & \textbf{51.5} (2e-4) & 21.7 (7e-5)   &  13.1 (3e-5)  & 13.2 (2e-5)
\\
\hline
\multicolumn{5}{|c|}{rank-2 state, MSEs$\times 10^{3}$}
\\
\hline
Inversion & 16.8 (8e-4)  & 15.9 (3e-4) & 15.9 (1e-4) & 15.8 (7e-5)
\\
\hline
Thresholding & 14.9 (3e-4)  & 15.5 (7e-5) & 15.5 (9e-6) & 15.5 (7e-6)
\\
\hline
prob &\textbf{9.29} (2e-3) & \textbf{7.90} (1e-3)  &\textbf{8.46} (1e-3) & \textbf{7.84} (8e-4)
\\
\hline
dens & 14.5 (3e-4) & 14.6 (3e-4)   & 14.4 (3e-4)   & 14.5 (4e-4)
\\
\hline
\multicolumn{5}{|c|}{approximate rank-2 state, MSEs$\times 10^{3}$}
\\
\hline
Inversion & 15.9 (8e-4) & 15.4 (2e-4) & 15.3 (1e-4) & 15.2 (4e-5)
\\
\hline
Thresholding & 14.3 (2e-4)  &  14.2 (3e-4) & 15.0 (1e-5) & 15.0 (6e-6)
\\
\hline
prob & \textbf{8.88} (9e-4)  &\textbf{7.68} (2e-3) & \textbf{8.11} (1e-3) &\textbf{ 7.39} (1e-3)
\\
\hline
dens & 13.9 (4e-4) &  15.1 (2e-4) & 14.2 (3e-4)   & 14.2 (2e-4)
\\
\hline
\multicolumn{5}{|c|}{maximal mixed state, MSEs$\times 10^{4}$}
\\
\hline
Inversion & 15.9 (4e-4) & 6.57 (7e-5)  & 5.09 (5e-5) & 4.76 (2e-5)
\\
\hline
Thresholding & \textbf{4.67} (9e-5) & 5.59 (5e-5)  & 5.34 (8e-5) & 6.06 (8e-5)
\\
\hline
prob & 5.44 (2e-4) & \textbf{3.37} (8e-5)  & \textbf{3.31} (8e-5) & \textbf{3.20} (8e-5)
\\
\hline
dens & 5.72 (9e-5) & 4.47  (6e-5)   & 4.56 (4e-5)   & 4.24 (2e-5)
\\
\hline\hline
\end{tabular}
\end{table}

\begin{table}[H]
\caption{MSEs for $ n=3 $  (together with standard deviations)}
\label{tablen_3}
\begin{tabular}{| l | c | c | c | c| }
\hline\hline
                    & $m = 20$ & $m = 200$ & $m = 1000$ & $m = 2000$
\\
\hline
\multicolumn{5}{|c|}{pure state, MSEs$\times 10^{4}$}
\\
\hline
Inversion & 39.5 (9e-4) & 3.17 (9e-5)  & .559 (1e-5) & .343 (1e-5)
\\
\hline
Thresholding & 21.4 (6e-4) & \textbf{2.26} (1e-4) & \textbf{.196} (1e-5) & \textbf{.152} (1e-5)
\\
\hline
prob & 40.3 (2e-2)  & 5.79 (4e-4) & 2.95 (2e-4)  & 1.78 (1e-4)
\\
\hline
dens & \textbf{12.8} (5e-4) & 2.73 (2e-4) & 1.24 (4e-5)  &  1.07 (4e-5)
\\
\hline
\multicolumn{5}{|c|}{rank-2 state, MSEs$\times 10^{2}$}
\\
\hline
Inversion & 3.69 (3e-3)  & 3.35 (6e-4) & 3.32 (4e-4) & 3.31 (2e-4)
\\
\hline
Thresholding & 2.94 (1e-3) & 3.05 (2e-4) & 3.04 (6e-5) & 3.05 (5e-5)
\\
\hline
prob &\textbf{1.91} (5e-3) & \textbf{1.17} (3e-3)  &\textbf{ 1.18} (3e-3) & \textbf{1.14} (2e-3)
\\
\hline
dens & 2.83 (8e-4) & 2.89 (3e-4)   & 2.89 (3e-4)   & 3.00 (1e-4)
\\
\hline
\multicolumn{5}{|c|}{approximate rank-2 state, MSEs$\times 10^{2}$}
\\
\hline
Inversion & 3.33 (2e-4) & 3.22 (8e-4) & 3.19 (3e-4) & 3.18 (2e-4)
\\
\hline
Thresholding & 2.81 (1e-3) & 2.96 (1e-4) & 2.97 (8e-5) & 2.97 (9e-5)
\\
\hline
prob & \textbf{1.10} (5e-3) & \textbf{.551} (5e-3)  &\textbf{.189} (2e-3) & \textbf{.113} (1e-3)
\\
\hline
dens & 2.74 (6e-4) &  2.88 (3e-4)  & 2.91 (3e-4)  & 2.91 (2e-4) 
\\
\hline
\multicolumn{5}{|c|}{maximal mixed state, MSEs$\times 10^{3}$}
\\
\hline
Inversion & 6.98 (2e-3) & 3.19 (4e-4) & 2.88 (2e-4) & 3.01 (1e-4)
\\
\hline
Thresholding & 4.41 (6e-4) & 3.26 (6e-4) & 3.19 (2e-4) & 3.29 (1e-4)
\\
\hline
prob & 3.63 (1e-3) &\textbf{2.70} (7e-4) & \textbf{2.28} (7e-4) &  \textbf{2.29} (1e-3)
\\
\hline
dens & \textbf{3.18} (6e-4) & 2.99 (4e-4) & 2.90 (2e-4)  & 3.04 (1e-4)
\\
\hline\hline
\end{tabular}
\end{table}

\begin{table}[H]
\caption{MSEs for $ n=2 $  (together with standard deviations)}
\label{tablen_2}
\begin{tabular}{| l | c | c | c | c |}
\hline\hline
    & $m = 20$ & $m = 200$ & $m = 1000$ & $m = 2000$
\\
\hline  \hline
\multicolumn{5}{|c|}{pure state, MSEs$\times 10^{4}$}
\\
\hline
Inversion & 61.9 (3e-3)  & 9.22 (5e-4)  & .802 (4e-5) & .772 (6e-5)
\\
\hline
Thresholding & \textbf{49.4} (3e-3) & \textbf{4.06} (3e-4) & \textbf{.737} (4e-5) & \textbf{.356} (2e-5)
\\
\hline
prob & 102 (8e-3) & 39.7 (2e-3)  & 9.37 (8e-4)  & 7.19 (5e-4)
\\
\hline
dens & 52.2 (3e-3) & 7.57  (5e-4)   &  1.91 (9e-5)  & 1.08 (2e-5)
\\
\hline 
\multicolumn{5}{|c|}{rank-2 state, MSEs$\times 10^{2}$}
\\
\hline
Inversion & 8.24 (2e-2) & 7.91 (3.2e-3)  & 7.81 (2e-3) & 7.74 (7e-4)
\\
\hline
Thresholding & 5.13 (3e-3) & 5.34 (1.1e-3)  & 5.32 (5e-4) & 5.33 (4e-4)
\\
\hline
prob & \textbf{2.62} (2e-2) & \textbf{1.77} (7.4e-3)  &\textbf{1.79} (8e-3) & \textbf{1.73} (5e-3)
\\
\hline
dens & 4.53 (3e-3) & 5.20 (1.5e-3)   & 5.24 (9e-4)   & 5.24 (9e-4)
\\
\hline  
\multicolumn{5}{|c|}{approximate rank-2 state, MSEs$\times 10^{2}$}
\\
\hline
Inversion & 8.12 (2e-2) &  7.54 (4e-3) & 7.54 (1.2e-3) & 7.56 (6e-4)
\\
\hline
Thresholding & 4.95 (4e-3) & 5.19 (8e-4)  & 5.23 (5e-4) & 5.22 (4e-4)
\\
\hline
prob & \textbf{2.69} (2e-2) &\textbf{1.82} (1.1e-2) &\textbf{1.52} (6e-3) &\textbf{1.58} (6e-3)
\\
\hline
dens & 4.40 (4e-3) & 5.02 (1.3e-3) & 5.11 (1e-3) & 5.15 (6e-4)
\\
\hline
\multicolumn{5}{|c|}{maximal state, MSEs$\times 10^{2}$}
\\
\hline
Inversion & 3.03 (9e-3) & 2.12 (2e-3) & 2.11 (2e-3)  & 2.11 (1e-3)
\\
\hline
Thresholding & 2.78 (8e-3) & 2.36 (2e-3) & 2.21 (2e-3) & 2.25 (1e-3)
\\
\hline
prob & 2.32 (2e-2) & \textbf{1.15} (5e-3) & \textbf{1.19} (5e-3) & \textbf{1.07} (4e-3)
\\
\hline
dens & \textbf{2.30} (6e-3) & 2.11 (2e-3)  & 2.06 (2e-3)  & 2.09 (1e-3)
\\
\hline\hline
\end{tabular}
\end{table}

\subsection{Real data tests}
The experiments performed to produce the data is
explained in \cite{barreiro2010experimental}. The
data was kindly provided by M.~Gu{\c{t}}{\u{a}}
and T. Monz. It had been used in \cite{alquier2013rank,guctua2012rank}.
We apply two proposed estimators to the real data set
of a system of 4 ions which is Smolin state further
manipulated. In Figure \ref{realdata} we plot the
eigenvalues of the inversion estimator and our ones.
\begin{figure}[!ht]
\centering
\includegraphics[scale=.47]{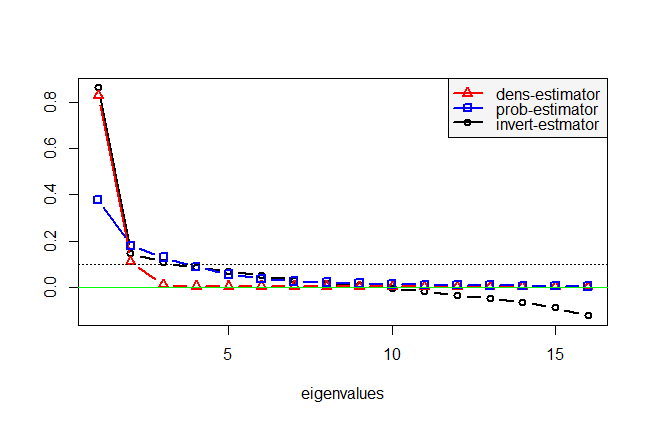}
\caption{eigenvalues plots for real data test with
$ n=4 $}
\label{realdata}
\end{figure}
Note that the distribution of the eigenvalues of the three estimators
are rather different. Still, it seems that all estimators return results
compatible with a rank-2 state.

\section{Discussion and conclusion}
\label{conclusion}
We propose a novel prior and introduce two pseudo-Bayesian estimators for
the density matrix: the dens-estimator and the prob-estimator.
The prob-estimator reaches the best up-to-date rate of
convergence in the low-rank case. On the other hand, computation of
the dens-estimator is an easier task. In practice, we
recommend the prob-estimator. However, in cases where
the MCMC shows activities of lacking of convergence,
the dens-estimator can be used as a reasonable alternative.

Note also that the prob-estimator can be extended to the
incomplete measurement case. We consider the
(incomplete) pseudo-likelihood as
\begin{align*}
 \ell^{prob-incomplete}(\nu,\mathcal{D}) = \sum_{\mathbf{a}\in \mathcal{A}}
\sum_{\mathbf{s}\in \mathcal{R}^n}
\left[{\rm Tr}(\nu P_\mathbf{s}^\mathbf{a})
-\hat{p}_{\mathbf{a},\mathbf{s}}\right]^2,
\end{align*}
where $ \mathcal{A} \varsubsetneq \mathcal{E}^n $. The study
in this case will be the object of future works.

Open questions include faster algorithms based on
optimization (in the spirit of \cite{alquier2015properties}).
Also, from a theoretical perspective, the most important
question is the minimax lower bound.

\section*{Acknowledgements}
Both authors gratefully acknowledge financial support from GENES and
by the French National Research Agency (ANR) under the grant
Labex Ecodec (ANR-11- LABEX-0047). P.Alquier gratefully acknowledges
financial support from the research programme {\it New Challenges for New
Data} from LCL and GENES, hosted by the {\it Fondation du Risque}.

\newpage
\appendix

\section{Proofs}
\label{Appendix}

We first remind here a version of Hoeffding's inequality
for bounded random variables.
\begin{lemma}
\label{bernstein ine}
Let $Y_i, i = 1,\ldots,n$ be $n$ independent random
variables with $|Y_i|\leq b$ a.s., and
 $\mathbb{E}(Y_i)=0$. Then, for any $\lambda>0$,
 $$ \mathbb{E}\exp\left(\frac{\lambda}{n}
 \sum_{i=1}^{n} Y_i\right)
        \leq
 \exp\left(\frac{\lambda^2 b^2}{8n} \right) .$$
\end{lemma}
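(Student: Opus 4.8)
The plan is to reduce the statement to a one-dimensional moment generating function estimate and then recombine the independent coordinates multiplicatively. First I would use the independence of the $Y_i$ together with factorization of the exponential: writing $\exp\bigl(\frac{\lambda}{n}\sum_{i=1}^n Y_i\bigr)=\prod_{i=1}^n \exp\bigl(\frac{\lambda}{n}Y_i\bigr)$ and taking expectations, independence gives $\mathbb{E}\exp\bigl(\frac{\lambda}{n}\sum_{i=1}^n Y_i\bigr)=\prod_{i=1}^n \mathbb{E}\exp\bigl(\frac{\lambda}{n}Y_i\bigr)$. This isolates a single scalar factor per index, governed by the effective parameter $s:=\lambda/n>0$.

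The core step is a one-variable Hoeffding-type bound applied to each factor: for a centered $Y$ with $|Y|\le b$ (hence supported in $[-b,b]$) I would show $\mathbb{E}\exp(sY)\le \exp\bigl(s^2 b^2/2\bigr)$. The proof rests on convexity: on $[-b,b]$ one has the chord bound $e^{st}\le \frac{b-t}{2b}e^{-sb}+\frac{b+t}{2b}e^{sb}$, so taking expectations and using $\mathbb{E}Y=0$ gives $\mathbb{E}\exp(sY)\le \cosh(sb)$. The elementary inequality $\cosh(x)\le \exp(x^2/2)$ (obtained by comparing power series term by term) then delivers the per-variable estimate. In the general, possibly non-symmetric, bounded case the same conclusion follows from the standard reduction to the log-MGF $\psi$ of a two-point law together with the uniform bound $\psi''\le 1/4$, which is where the variance of a Bernoulli-type variable enters.

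Finally I would reassemble the bound: substituting $s=\lambda/n$ and multiplying the $n$ independent factors converts the product into a sum of exponents, namely $\prod_{i=1}^n \exp\bigl((\lambda/n)^2 b^2/2\bigr)=\exp\bigl(\lambda^2 b^2/(2n)\bigr)$. The only real obstacle is the single-variable estimate, which here collapses to the scalar inequality $\cosh(x)\le \exp(x^2/2)$; everything else is bookkeeping. I would, however, track the numerical constant carefully, since $|Y_i|\le b$ corresponds to a support interval of full length $2b$: the symmetric chord argument produces the factor $1/2$ in the exponent, so matching the displayed constant $1/8$ requires reading the relevant half-width as $b$ (equivalently, it is the variant with bounding interval of length $b$ that yields exactly $\exp(\lambda^2 b^2/(8n))$), and I would make sure the interval is assigned consistently throughout.
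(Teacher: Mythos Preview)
The paper does not give a proof of this lemma: it is merely stated as a ``reminder'' of Hoeffding's inequality and then invoked in subsequent lemmas. Your approach---factorize by independence, then apply the one-dimensional Hoeffding lemma via the chord bound and $\cosh(x)\le e^{x^2/2}$---is exactly the standard proof, so there is nothing to compare.

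Your final caveat about the constant is correct and worth emphasizing. With $|Y_i|\le b$ one has $Y_i\in[-b,b]$, an interval of length $2b$, and Hoeffding's lemma gives $\mathbb{E}\,e^{sY_i}\le \exp\bigl(s^2(2b)^2/8\bigr)=\exp(s^2b^2/2)$. After multiplying $n$ factors with $s=\lambda/n$ one obtains $\exp\bigl(\lambda^2 b^2/(2n)\bigr)$, not $\exp\bigl(\lambda^2 b^2/(8n)\bigr)$. The displayed constant $1/8$ would be correct only if $b$ denoted the full length of the support interval rather than a bound on $|Y_i|$; as written, the lemma is a factor $4$ too sharp. This does not affect the rates in Theorems~\ref{thrm: rate 1} and~\ref{rate 2} (only the absolute constants $C^{prob}_{D_1,D_2}$ and $C^{dens}_{D_1,D_2}$ change), but you should not try to make your proof match the stated $1/8$---it cannot, under the hypothesis $|Y_i|\le b$.
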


\subsection{Preliminary lemmas for the proof of Theorem~\ref{thrm: rate 1}}

\begin{lemma}
\label{lem app bern 1}
 For any $\lambda>0$, we have
 $$\mathbb{E}\exp\left(\lambda \left<p_\nu - p^0,
 p^0-\hat{p}\right>_F\right)
  \leq \exp\left[ \frac{\lambda^2}{4m}\|p^0
  -p_\nu\|_F^2  \right],
  $$
$$\mathbb{E}\exp\left(-\lambda \left<p_\nu - p^0,
p^0-\hat{p}\right>_F\right)
  \leq
  \exp\left[ \frac{\lambda^2}{4m}\|p^0
  -p_\nu\|_F^2  \right]. $$
\end{lemma}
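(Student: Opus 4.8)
The plan is to reduce the bound to a single invocation of the Hoeffding inequality recalled in Lemma~\ref{bernstein ine}, exploiting the product structure of the data. First I would expand the Frobenius inner product entrywise and insert $\hat{p}_{\mathbf{a},\mathbf{s}} = \frac{1}{m}\sum_{i=1}^m \mathbf{1}_{\{R_i^\mathbf{a}=\mathbf{s}\}}$ to write
\[
\langle p_\nu - p^0,\, p^0 - \hat{p}\rangle_F = \sum_{\mathbf{a}\in\mathcal{E}^n}\frac{1}{m}\sum_{i=1}^m Z_i^\mathbf{a}, \qquad Z_i^\mathbf{a} := \sum_{\mathbf{s}\in\mathcal{R}^n} c_{\mathbf{a},\mathbf{s}}\bigl(p^0_{\mathbf{a},\mathbf{s}} - \mathbf{1}_{\{R_i^\mathbf{a}=\mathbf{s}\}}\bigr),
\]
where $c_{\mathbf{a},\mathbf{s}} := {\rm Tr}(\nu P_\mathbf{s}^\mathbf{a}) - p^0_{\mathbf{a},\mathbf{s}}$. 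Since $\mathbb{E}\,\mathbf{1}_{\{R_i^\mathbf{a}=\mathbf{s}\}} = p^0_{\mathbf{a},\mathbf{s}}$, each $Z_i^\mathbf{a}$ is centered, and the whole family $\{Z_i^\mathbf{a}\}_{i,\mathbf{a}}$ is independent because the outcomes $R_i^\mathbf{a}$ are independent across both repetitions $i$ and settings $\mathbf{a}$.

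Next, conditioning on the single realized outcome $R_i^\mathbf{a}=\mathbf{s}^\ast$, I would observe that $Z_i^\mathbf{a} = \bar{c}_\mathbf{a} - c_{\mathbf{a},\mathbf{s}^\ast}$, where $\bar{c}_\mathbf{a} := \sum_\mathbf{s} c_{\mathbf{a},\mathbf{s}} p^0_{\mathbf{a},\mathbf{s}}$ is a convex combination of the $c_{\mathbf{a},\mathbf{s}}$ (as $p^0_{\mathbf{a},\cdot}\ge 0$ sums to one). Hence $|Z_i^\mathbf{a}|\le M_\mathbf{a} - \mu_\mathbf{a} =: b_\mathbf{a}$ with $M_\mathbf{a} := \max_\mathbf{s} c_{\mathbf{a},\mathbf{s}}$ and $\mu_\mathbf{a} := \min_\mathbf{s} c_{\mathbf{a},\mathbf{s}}$. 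Factoring the moment generating function over the independent settings and applying Lemma~\ref{bernstein ine} to the $m$ i.i.d.\ centered terms for each fixed $\mathbf{a}$ (with $n=m$, common bound $b_\mathbf{a}$, and the same $\lambda$) gives
\[
\mathbb{E}\exp\bigl(\lambda\langle p_\nu - p^0,\, p^0-\hat{p}\rangle_F\bigr) = \prod_{\mathbf{a}}\mathbb{E}\exp\Bigl(\tfrac{\lambda}{m}\textstyle\sum_{i=1}^m Z_i^\mathbf{a}\Bigr) \le \prod_\mathbf{a}\exp\Bigl(\tfrac{\lambda^2 b_\mathbf{a}^2}{8m}\Bigr).
\]

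The crux is then the elementary inequality $b_\mathbf{a}^2 \le 2\sum_\mathbf{s} c_{\mathbf{a},\mathbf{s}}^2$. It rests on the constraint $\sum_\mathbf{s} c_{\mathbf{a},\mathbf{s}} = 0$ (both ${\rm Tr}(\nu P_\cdot^\mathbf{a})$ and $p^0_{\mathbf{a},\cdot}$ are probability vectors over the $2^n$ outcomes), which forces $M_\mathbf{a}\ge 0\ge\mu_\mathbf{a}$; then $b_\mathbf{a}^2 = (M_\mathbf{a}-\mu_\mathbf{a})^2 = M_\mathbf{a}^2 + \mu_\mathbf{a}^2 + 2M_\mathbf{a}|\mu_\mathbf{a}| \le 2(M_\mathbf{a}^2+\mu_\mathbf{a}^2)\le 2\sum_\mathbf{s} c_{\mathbf{a},\mathbf{s}}^2$, using AM--GM and then retaining only the two extremal terms of the sum. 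Substituting this and recognizing $\sum_{\mathbf{a},\mathbf{s}} c_{\mathbf{a},\mathbf{s}}^2 = \|p^0 - p_\nu\|_F^2$ produces the first bound with the stated constant $\lambda^2/(4m)$. The second inequality is obtained verbatim after replacing $Z_i^\mathbf{a}$ by $-Z_i^\mathbf{a}$, which remains centered with the same bound $b_\mathbf{a}$. I expect the only genuinely delicate point to be the clean justification of $b_\mathbf{a}^2\le 2\sum_\mathbf{s} c_{\mathbf{a},\mathbf{s}}^2$ via the zero-sum constraint; the remainder is bookkeeping and a direct application of Lemma~\ref{bernstein ine}.
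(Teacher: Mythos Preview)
Your proof is correct and follows the same overall architecture as the paper: expand the inner product, factor over the independent settings $\mathbf{a}$, identify centered bounded summands, and apply Lemma~\ref{bernstein ine} per setting. The only substantive difference is in how you bound the summands. The paper bounds $Y_{i,\mathbf{a}}^2$ directly by Cauchy--Schwarz,
\[
Y_{i,\mathbf{a}}^2 \le \Bigl(\sum_{\mathbf{s}} c_{\mathbf{a},\mathbf{s}}^2\Bigr)\Bigl(\sum_{\mathbf{s}} [p^0_{\mathbf{a},\mathbf{s}} - \mathbf{1}_{\{R_i^\mathbf{a}=\mathbf{s}\}}]^2\Bigr) \le 2\sum_{\mathbf{s}} c_{\mathbf{a},\mathbf{s}}^2,
\]
the last step using that each $|p^0_{\mathbf{a},\mathbf{s}} - \mathbf{1}_{\{R_i^\mathbf{a}=\mathbf{s}\}}|\le 1$ so squares are dominated by absolute values, and the $\ell_1$-sum is at most $2$. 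You instead observe $Z_i^\mathbf{a} = \bar c_\mathbf{a} - c_{\mathbf{a},\mathbf{s}^\ast}$, bound by the range $b_\mathbf{a}=M_\mathbf{a}-\mu_\mathbf{a}$, and then use the zero-sum constraint $\sum_\mathbf{s} c_{\mathbf{a},\mathbf{s}}=0$ together with AM--GM to get $b_\mathbf{a}^2\le 2\sum_\mathbf{s} c_{\mathbf{a},\mathbf{s}}^2$. Both routes land on the identical constant $\lambda^2/(4m)$; the paper's Cauchy--Schwarz step is marginally quicker, while your range argument makes explicit use of the fact that $p_\nu(\mathbf{a},\cdot)$ and $p^0(\mathbf{a},\cdot)$ are both probability vectors, which is a nice structural observation. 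The step you flagged as delicate is indeed sound; the paper simply bypasses it.
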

\begin{proof}
 First inequality:
 \begin{align*}
  \mathbb{E}\exp   & \left(\lambda
  \left<p_\nu - p^0,p^0-\hat{p}\right>_F\right)
\\
  & = \mathbb{E}\exp\left(\lambda \sum_{a\in
  \mathcal{E}^n}\sum_{s \in \mathcal{R}^n}
  \underbrace{[{\rm Tr}(\nu P_s^a)
   - p^0_{a,s}]}_{=: c(a,s)}
      [p^0_{a,s}-\hat{p}_{a,s}] \right)
  \\
  & = \prod_{a\in   \mathcal{E}^n} \mathbb{E}
  \exp\left(\lambda  \sum_{s \in \mathcal{R}^n} c(a,s)
      \left[p^0_{a,s}-\frac{1}{m}\sum_{i=1}^m
\mathbf{1}(R_i^a=s)\right] \right)
 \\
 & = \prod_{a\in \mathcal{E}^n} \mathbb{E}
 \exp\Bigg(\frac{\lambda}{m}
 \sum_{i=1}^m \underbrace{\left[ \sum_{s
 \in \mathcal{R}^n} c(a,s) \{p^0_{a,s} -
 \mathbf{1}(R_i^a=s)\} \right]}_{=: Y_{i,a}}\Bigg)
 \end{align*}
We have that $\mathbb{E}(Y_{i,a})=0$.
Then, using Cauchy-Schwartz inequality
 \begin{multline*}
Y_{i,a}^2
   \leq
\left(\sum_{s \in \mathcal{R}^n}
  c(a,s)^2\right)
   \left(\sum_{s \in \mathcal{R}^n} [p^0_{a,s}
   -\mathbf{1}(R_i^a=s)]^2\right)
   \\
   \leq
\left(\sum_{s \in \mathcal{R}^n} c(a,s)^2\right)
  \left(\sum_{s \in \mathcal{R}^n}  |p^0_{a,s} -
   \mathbf{1}(R_i^a=s) | \right)
 \leq 2 \left(\sum_{s \in \mathcal{R}^n} c(a,s)^2\right).
 \end{multline*}
 So we can apply Hoeffding's inequality (Lemma~\ref{bernstein ine}):
\begin{align*}
 \prod_{a\in \mathcal{E}^n}
  \mathbb{E}\exp\left(\frac{\lambda}{m}
 \sum_{i=1}^m Y_{i,a} \right)
  &
  \leq
   \prod_{a\in \mathcal{E}^n}
     \exp\left[   \frac{2\lambda^2}{8m}
\left(\sum_{s \in \mathcal{R}^n} c(a,s)^2\right)\right]
  \\
  & \leq  \exp\left[
   \frac{\lambda^2}{4m} \|p-p_\nu\|_F^2  \right].
\end{align*}
 Second inequality: same proof, just replace $Y_{i,a}$ by $-Y_{i,a}$.
\end{proof}

\begin{lemma}
\label{lem3}
For $ \lambda >0 $, we have
  \begin{align}
 \label{apbern1}
 \mathbb{E}\exp\left\{
 \lambda \left(\|p_\nu - \hat{p}\|_F^2
      - \|p^0-\hat{p}\|_F^2\right)
 - \lambda\left[1+  \frac{\lambda}{m} \right]\|p^0-p_\nu\|_F^2
      \right\}     \leq 1,
 \\
       \mathbb{E}\exp\left\{\lambda\left[1-
       \frac{\lambda}{m}   \right]\|p^0-p_\nu\|_F^2
    -\lambda \left(\|p_\nu - \hat{p}\|_F^2
    - \|p^0-\hat{p}\|_F^2\right)\right\} \leq 1 .
     \label{apbern2}
 \end{align}
\end{lemma}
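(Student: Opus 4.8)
The plan is to reduce both inequalities to a direct application of Lemma~\ref{lem app bern 1} after a single algebraic expansion. First I would expand the quadratic difference by writing $p_\nu - \hat{p} = (p_\nu - p^0) + (p^0 - \hat{p})$ and expanding the square, which gives
\begin{equation*}
\|p_\nu - \hat{p}\|_F^2 - \|p^0 - \hat{p}\|_F^2 = \|p_\nu - p^0\|_F^2 + 2\left<p_\nu - p^0,\, p^0 - \hat{p}\right>_F.
\end{equation*}
Substituting this into the exponent of~\eqref{apbern1}, the term $\lambda\|p_\nu - p^0\|_F^2$ cancels against the $\lambda\|p^0 - p_\nu\|_F^2$ piece of $\lambda\left[1 + \frac{\lambda}{m}\right]\|p^0 - p_\nu\|_F^2$, so the exponent collapses to
\begin{equation*}
2\lambda\left<p_\nu - p^0,\, p^0 - \hat{p}\right>_F - \frac{\lambda^2}{m}\|p^0 - p_\nu\|_F^2.
\end{equation*}

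The key observation is that $\|p^0 - p_\nu\|_F^2$ is deterministic: it depends only on the candidate $\nu$ and on the unknown true state, not on the random sample. Hence the factor $\exp\left(-\frac{\lambda^2}{m}\|p^0 - p_\nu\|_F^2\right)$ pulls out of the expectation, and it remains to control $\mathbb{E}\exp\left(2\lambda\left<p_\nu - p^0, p^0 - \hat{p}\right>_F\right)$. This is precisely the first bound of Lemma~\ref{lem app bern 1} applied with parameter $2\lambda$ in place of $\lambda$, yielding $\exp\left(\frac{(2\lambda)^2}{4m}\|p^0 - p_\nu\|_F^2\right) = \exp\left(\frac{\lambda^2}{m}\|p^0 - p_\nu\|_F^2\right)$. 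Multiplying the two exponential factors gives exactly $1$, which establishes~\eqref{apbern1}.

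For the second inequality~\eqref{apbern2} I would carry out the same expansion; this time the $\lambda\|p_\nu - p^0\|_F^2$ term cancels against the $+\lambda\|p^0 - p_\nu\|_F^2$ contribution, and the exponent reduces to $-\frac{\lambda^2}{m}\|p^0 - p_\nu\|_F^2 - 2\lambda\left<p_\nu - p^0, p^0 - \hat{p}\right>_F$. One then applies the second bound of Lemma~\ref{lem app bern 1} (again with $2\lambda$) to the negative cross term and concludes in the same manner.

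The computation is essentially routine once the expansion is made; the only point requiring care is matching the constants, namely that the $\frac{(2\lambda)^2}{4m}$ produced by Lemma~\ref{lem app bern 1} \emph{exactly} cancels the coefficient $\frac{\lambda^2}{m}$ of $\|p^0 - p_\nu\|_F^2$. This precise cancellation is what dictates the coefficient $1 \pm \frac{\lambda}{m}$ in the statement, and it is the sole place where one must track the factor of two hidden in the cross term $2\left<\cdot,\cdot\right>_F$.
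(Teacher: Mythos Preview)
Your proposal is correct and follows essentially the same approach as the paper's proof: both expand the quadratic difference into $\|p_\nu - p^0\|_F^2 + 2\langle p_\nu - p^0,\, p^0 - \hat{p}\rangle_F$, pull out the deterministic factor, and apply Lemma~\ref{lem app bern 1} with parameter $2\lambda$ to obtain the factor $\exp\bigl(\tfrac{\lambda^2}{m}\|p^0 - p_\nu\|_F^2\bigr)$. The only cosmetic difference is that the paper bounds $\mathbb{E}\exp\{\lambda(\|p_\nu - \hat{p}\|_F^2 - \|p^0 - \hat{p}\|_F^2)\}$ first and then rearranges, whereas you substitute the full exponent upfront and cancel; the computations are identical.
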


\begin{proof}
 Proof of the first inequality:
 \begin{align*}
\mathbb{E}\exp & \left\{ \lambda \left(\|p_\nu-
\hat{p}\|_F^2 - \|p-\hat{p}\|_F^2\right) \right\}
 \\
&     = \mathbb{E}\exp\left\{ \lambda
\left<p_\nu - p^0,p_\nu+p^0 -2\hat{p}\right>_F \right\}
 \\
& = \mathbb{E}\exp\left\{ \lambda \|p_\nu-p^0\|_F^2
+ 2\lambda \left<p_\nu - p^0,p^0-\hat{p}\right>_F \right\}
 \\
& = \exp\left(\lambda \|p_\nu-p^0\|_F^2\right)
 \mathbb{E}\exp\left\{  2\lambda \left<p_{\nu}
 -p^0,p^0-\hat{p}\right>_F \right\}
   \\
&   \leq  \exp\left(\lambda \|p_\nu-p^0\|_F^2\right)
 \exp\left\{ \frac{\lambda^2}{m}
  \|p_{\nu}-p^0\|_F^2\right\}
 \end{align*}
thanks to Lemma~\ref{lem app bern 1}.
The proof of the second inequality is similar.
\end{proof}

Using Lemma~\ref{lem3}, we derive an
empirical PAC-Bayes bound for the estimator.
\begin{lemma}
\label{PAC-bound:empirical}
For $\lambda>0$ s.t. $ \frac{\lambda}{m} <1$,
with prob. $ 1- \epsilon/2, \epsilon\in(0,1) $,
for any distribution $\hat{\pi}$, we have:
\begin{align*}
 \int \|p_\nu-p^0\|_F^2 \tilde{\pi}_{\lambda} ({\rm d}\nu)
 \leq \frac{ \int\|p_\nu-\hat{p}\|_F^2
 \hat{\pi} ({\rm d}\nu)  - \|p^0-\hat{p}\|_F^2
 +\frac{\mathcal{K}(\tilde{\pi}_{\lambda},\pi)+
 \log\left(\frac{2}{\epsilon}
  \right)}{\lambda}}{1- \frac{\lambda}{m}}.
\end{align*}
\end{lemma}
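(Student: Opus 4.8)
The plan is to follow the classical PAC-Bayesian scheme, using the one-sided exponential-moment control already established in Lemma~\ref{lem3}. Introduce the shorthand
\begin{equation*}
\Phi(\nu) := \lambda\Big(1-\tfrac{\lambda}{m}\Big)\|p^0-p_\nu\|_F^2 - \lambda\big(\|p_\nu-\hat{p}\|_F^2-\|p^0-\hat{p}\|_F^2\big),
\end{equation*}
so that inequality~\eqref{apbern2} reads exactly $\mathbb{E}\exp(\Phi(\nu))\leq 1$ for every fixed $\nu$. First I would integrate this pointwise bound against the prior $\pi$ and swap expectation and integral by Fubini--Tonelli (all integrands are nonnegative), which yields $\mathbb{E}\int \exp(\Phi(\nu))\,\pi({\rm d}\nu)\leq 1$.

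Next I would turn this expectation bound into a high-probability statement: since $\int\exp(\Phi(\nu))\,\pi({\rm d}\nu)$ is a nonnegative random variable with expectation at most $1$, Markov's inequality gives $\int\exp(\Phi(\nu))\,\pi({\rm d}\nu)\leq 2/\epsilon$ with probability at least $1-\epsilon/2$, so that on this event $\log\int\exp(\Phi(\nu))\,\pi({\rm d}\nu)\leq\log(2/\epsilon)$. The crucial step is then the Donsker--Varadhan change-of-measure (variational) inequality: for any probability measure, and in particular for the pseudo-posterior $\tilde{\pi}_{\lambda}$,
\begin{equation*}
\int \Phi(\nu)\,\tilde{\pi}_{\lambda}({\rm d}\nu) - \mathcal{K}(\tilde{\pi}_{\lambda},\pi) \leq \log\int\exp(\Phi(\nu))\,\pi({\rm d}\nu) \leq \log(2/\epsilon).
\end{equation*}

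Substituting the definition of $\Phi$, moving the data-fit terms to the right, and dividing by $\lambda(1-\lambda/m)$, which is strictly positive precisely because of the hypothesis $\lambda/m<1$ (so that the inequality direction is preserved), isolates $\int\|p_\nu-p^0\|_F^2\,\tilde{\pi}_{\lambda}({\rm d}\nu)$ on the left and already produces the announced fraction, except that the data-fit integral is against $\tilde{\pi}_{\lambda}$ rather than against an arbitrary $\hat{\pi}$. To make $\hat{\pi}$ appear I would invoke the variational characterization of the Gibbs measure, namely that $\tilde{\pi}_{\lambda}$ minimizes $\rho\mapsto\lambda\int\|p_\nu-\hat{p}\|_F^2\,\rho({\rm d}\nu)+\mathcal{K}(\rho,\pi)$ over all $\rho\ll\pi$; comparing the value of this functional at $\tilde{\pi}_{\lambda}$ and at $\hat{\pi}$ upper bounds $\int\|p_\nu-\hat{p}\|_F^2\,\tilde{\pi}_{\lambda}({\rm d}\nu)$ by $\int\|p_\nu-\hat{p}\|_F^2\,\hat{\pi}({\rm d}\nu)$ at the cost of the associated KL terms, and the bookkeeping of these terms yields the stated numerator.

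I expect the only genuine subtlety to be this last combination: keeping careful track of which KL divergence survives the cancellation between the change-of-measure step and the Gibbs comparison, and checking the sign of $1-\lambda/m$ before dividing. By contrast, the entire probabilistic (concentration) content of the statement is already packaged in Lemma~\ref{lem3}, which itself rests on the Hoeffding bound of Lemma~\ref{bernstein ine}; no additional concentration argument is needed here, so the proof is essentially an exercise in the PAC-Bayesian ``integrate, Markov, change measure'' template applied to the exponential inequality~\eqref{apbern2}.
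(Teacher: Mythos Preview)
Your proof is correct and follows the same PAC-Bayesian template as the paper: integrate~\eqref{apbern2} against $\pi$, swap by Fubini, pass to high probability, and apply the Donsker--Varadhan duality. The one organisational difference is that the paper invokes the full variational identity $\log\int e^{\Phi}\,d\pi=\sup_{\hat\pi}\bigl[\int\Phi\,d\hat\pi-\mathcal{K}(\hat\pi,\pi)\bigr]$ (Catoni's Lemma~1.1.3) and thereby obtains the bound \emph{uniformly in $\hat\pi$}, with the same measure appearing on both sides and $\mathcal{K}(\hat\pi,\pi)$ in the numerator. The lemma as printed mixes $\tilde\pi_\lambda$ and $\hat\pi$; this is a typographical slip, and what is actually proved and used downstream (see the proof of Lemma~\ref{bound1}) is the uniform version with $\hat\pi$ everywhere.

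Consequently your final step---comparing the Gibbs functional at $\tilde\pi_\lambda$ and at $\hat\pi$---is not needed. It is harmless, but if you do carry it out you will find (as your own caveat anticipates) that the surviving Kullback--Leibler term is $\mathcal{K}(\hat\pi,\pi)$, not $\mathcal{K}(\tilde\pi_\lambda,\pi)$; this is in fact the form the paper uses in the next lemma. So the cleanest route is simply to apply Donsker--Varadhan at an arbitrary $\hat\pi$ from the start and skip the Gibbs comparison.
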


\begin{proof}
We rewrite (\ref{apbern2}) in Lemma~\ref{lem3} as
follows
\begin{align*}
\int\mathbb{E}\exp \Bigg\{ \lambda
\left[1 - \frac{\lambda}{m}\right] \|p^0 -p_\nu\|_F^2
 - \lambda  \left(\| p_\nu - \hat{p} \|_F^2
 - \| p^0 - \hat{p}\|_F^2\right)  \Bigg\} \pi (d\nu)
  \leq 1.
 \end{align*}
By using Fubini's theorem
\begin{align*}
\mathbb{E}\int\exp \Bigg\{ \lambda
\left[1 - \frac{\lambda}{m} \right]  \|p^0-p_\nu\|_F^2
- \lambda  \left(\| p_\nu - \hat{p} \|_F^2
 - \| p^0- \hat{p}\|_F^2\right)  \Bigg\} \pi (d\nu)
  \leq 1.
 \end{align*}
Now, using~\cite[Lemma 1.1.3]{catonibook},
for any distribution $ \hat{\pi} $, we have
\begin{align*}
\mathbb{E}\exp\sup_{\hat{\pi}}\Bigg\{
 \lambda\left[ 1 - \frac{\lambda}{m} \right]
 \int \|p^0-p_\nu\|_F^2 \hat{\pi}(d\nu)
  - \log\left(2 / \epsilon \right)
   - \mathcal{K}(\hat{\pi},\pi) \hspace*{1cm}
\\
- \lambda  \left(\int \| p_{\nu} - \hat{p}\|_F^2   \hat{\pi}(d\nu)
 - \| p^0- \hat{\rho}\|_F^2\right)  \Bigg\}
  \leq \frac{\epsilon}{2}
 \end{align*}
and with $ \mathbf{1}_{\mathbf{R}_+}(x) \leq \exp(x) $,
one has
\begin{align*}
\mathbb{P}\Bigg\{ \sup_{\hat{\pi}} \Bigg[
\lambda\left[ 1 - \frac{\lambda}{m} \right]
 \int \|p^0 -p_\nu\|_F^2 \hat{\pi}(d\nu)
  - \log\left(2 / \epsilon \right)
   - \mathcal{K}(\hat{\pi},\pi) \hspace*{1cm}
\\
- \lambda  \left(\int \| p_{\nu} - \hat{p}\|_F^2   \hat{\pi}(d\nu)
 - \| p^0- \hat{\rho}\|_F^2\right)  \Bigg] \geq 0  \Bigg\}
  \leq \frac{\epsilon}{2}.
 \end{align*}
Taking the complementary yields successfully the results.
\end{proof}

The following lemma give a theoretical PAC-Bayes
bound for the estimator.
\begin{lemma}
 \label{bound1}
For $ \lambda>0 $ s.t $ \frac{\lambda}{m}<1$, with
probability  $1-\epsilon$ we have:
 \begin{align}
 \label{bound1-equation}
 \int \|p_\nu-p^0\|_F^2\hat{\pi}^{prob}_{\lambda}
 ({\rm d}\nu)   \leq \inf_{\hat{\pi}} \frac{
  \left[1+ \frac{\lambda}{m} \right]\int \|p_\nu
  -p^0\|_F^2\tilde{\pi}({\rm d}\nu)
           +\frac{2\mathcal{K}(\hat{\pi},\pi)
           +   2\log\left(\frac{2}{\epsilon}
            \right)}{\lambda}  }{1 - \frac{\lambda}{m} }
\end{align}
and
\begin{align}\label{oracle main}
\int \|\nu-\rho^0\|_F^2\hat{\pi}^{prob}_{\lambda}
   ({\rm d}\nu)   \leq        \inf_{\hat{\pi}} \frac{
  3^n \left[1+ \frac{\lambda}{m} \right]
  \int \|\nu-\rho^0\|_F^2\hat{\pi}({\rm d}\nu)
           +\frac{2\mathcal{K}(\hat{\pi},\pi)
           +  2\log\left(\frac{2}{\epsilon}
            \right)}{2^n \lambda}    }{1- \frac{\lambda}{m}}.
 \end{align}
\end{lemma}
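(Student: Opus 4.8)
The plan is to obtain the ``theoretical'' oracle bound~\eqref{bound1-equation} by pairing the empirical PAC-Bayes bound of Lemma~\ref{PAC-bound:empirical} (which rests on inequality~\eqref{apbern2}) with its mirror image coming from~\eqref{apbern1}, and then to transfer the conclusion from the probability representation to the density-matrix representation through the norm equivalence induced by the complete Pauli design.

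First I would read Lemma~\ref{PAC-bound:empirical} with the choice $\hat\pi=\tilde\pi^{prob}_\lambda$ (the measure for which the empirical integral and the Kullback--Leibler term refer to the same distribution). The key point is that $\tilde\pi^{prob}_\lambda\propto\exp(-\lambda\|p_\nu-\hat p\|_F^2)\pi$ is exactly the Gibbs measure minimizing $\hat\pi\mapsto\lambda\int\|p_\nu-\hat p\|_F^2\,\hat\pi({\rm d}\nu)+\mathcal K(\hat\pi,\pi)$, so by the Donsker--Varadhan variational identity
\[ \int\|p_\nu-\hat p\|_F^2\,\tilde\pi^{prob}_\lambda({\rm d}\nu)+\frac{\mathcal K(\tilde\pi^{prob}_\lambda,\pi)}{\lambda}=\inf_{\hat\pi}\left[\int\|p_\nu-\hat p\|_F^2\,\hat\pi({\rm d}\nu)+\frac{\mathcal K(\hat\pi,\pi)}{\lambda}\right]. \]
Substituting this into the right-hand side of Lemma~\ref{PAC-bound:empirical} converts the bound, still valid with probability $1-\epsilon/2$, into an infimum over $\hat\pi$ of the empirical excess risk $\int\|p_\nu-\hat p\|_F^2\,\hat\pi({\rm d}\nu)-\|p^0-\hat p\|_F^2$ plus the penalty $[\mathcal K(\hat\pi,\pi)+\log(2/\epsilon)]/\lambda$.

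Next I would produce the complementary control of that empirical excess risk. Starting from~\eqref{apbern1}, I integrate against $\pi$, apply Fubini, and invoke the same variational lemma~\cite[Lemma~1.1.3]{catonibook} together with $\mathbf 1_{\mathbb R_+}\le\exp$, exactly as in the proof of Lemma~\ref{PAC-bound:empirical} but with the roles reversed. This yields, with probability $1-\epsilon/2$ and simultaneously for every $\hat\pi$,
\[ \int\|p_\nu-\hat p\|_F^2\,\hat\pi({\rm d}\nu)-\|p^0-\hat p\|_F^2\ \le\ \Bigl[1+\tfrac{\lambda}{m}\Bigr]\int\|p_\nu-p^0\|_F^2\,\hat\pi({\rm d}\nu)+\frac{\mathcal K(\hat\pi,\pi)+\log(2/\epsilon)}{\lambda}. \]
A union bound merges the two events into one of probability at least $1-\epsilon$; inserting this inequality into the infimum obtained above collapses the empirical quantities and gives precisely~\eqref{bound1-equation}, the doubled $\mathcal K$ and $\log(2/\epsilon)$ terms being the origin of the factor $2$ in the numerator.

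Finally, to pass to~\eqref{oracle main} I would establish the equivalence between $\|p_\nu-p^0\|_F^2$ and $\|\nu-\rho^0\|_F^2$. Expanding $\nu-\rho^0$ in the tensor Pauli basis, using ${\rm Tr}(\sigma_c\sigma_{c'})=2^n\delta_{cc'}$ and $P_{\mathbf s}^{\mathbf a}=2^{-n}\bigotimes_i(\mathbb I+s_i\sigma_{a_i})$, a direct computation shows that $\sum_{\mathbf a,\mathbf s}[{\rm Tr}((\nu-\rho^0)P_{\mathbf s}^{\mathbf a})]^2$ is a weighted sum of the squared Pauli coefficients with weights $3^{\,n-|c|}$ ($|c|$ the number of non-identity factors of $c$), hence is sandwiched between two constant multiples of $\|\nu-\rho^0\|_F^2$ whose ratio is $3^n$. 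Carrying the lower constant across to the left-hand side of~\eqref{bound1-equation}, the upper constant into the numerator, and accounting for the $d=2^n$ normalization of the expansion, turns $[1+\lambda/m]$ into $3^n[1+\lambda/m]$ and divides the penalty by $2^n$, as recorded in~\eqref{oracle main}. The main obstacle I anticipate is precisely this last step: one must verify that the $\ell_2$ discrepancy between probability tables and the Frobenius discrepancy between density matrices differ only by explicit powers of $2$ and $3$, and in particular that the smallest sandwiching constant is bounded below (so that dividing by it is legitimate) while the largest introduces exactly the $3^n$ that governs all downstream rates. By comparison the two PAC-Bayes steps are symmetric repetitions of the argument already used for Lemma~\ref{PAC-bound:empirical}, and the Gibbs variational identity is standard.
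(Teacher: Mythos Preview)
Your proposal is correct and follows essentially the same route as the paper: specialize Lemma~\ref{PAC-bound:empirical} to $\hat\pi=\tilde\pi^{prob}_\lambda$, invoke the Gibbs variational identity (this is the paper's appeal to \cite[Lemma~1.1.3]{catonibook}), combine via a union bound with the mirror inequality derived from~\eqref{apbern1}, and finally pass from $\|p_\nu-p^0\|_F^2$ to $\|\nu-\rho^0\|_F^2$. The only cosmetic difference is that the paper quotes the sandwich $2^n\|\nu-\rho^0\|_F^2\le\|p_\nu-p^0\|_F^2\le 6^n\|\nu-\rho^0\|_F^2$ from the eigenvalue bounds on $\mathbf P^T\mathbf P$ in~\cite[Proposition~1]{alquier2013rank}, whereas you recompute it directly from the Pauli expansion; both yield the same factors $3^n$ and $2^{-n}$ in~\eqref{oracle main}.
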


\begin{proof}
\label{proofbound1}
Using the same proof of
Lemma~\ref{PAC-bound:empirical} for inequality
(\ref{apbern1}) in Lemma~\ref{lem3}, we obtain
with probability at least
$ 1- \epsilon/2, \epsilon\in(0,1) $,
for any distribution $\hat{\pi}$ that
\begin{align*}
\int \| p^0- \hat{p}\|_F^2   \hat{\pi}(d\nu)
\leq
\left[ 1 + \frac{\lambda}{m}\right] \int \| p_{\nu}- p^0\|_F^2
 \hat{\pi}(d\nu)  + \| p^0- \hat{p}\|_F^2
+ \frac{\mathcal{K}(\hat{\pi},\pi) +
\log(\frac{2}{\epsilon})}{\lambda}
 \end{align*}
With a union argument, combining the Lemma~\ref{PAC-bound:empirical}
and the above inequality yields the following inequality
with probability at least $ 1-\epsilon, \epsilon
 \in (0,1) $, for any $ \hat{\pi} $
 \begin{align*}
 \int \|p_{\nu} - p^0\|_F^2\hat{\pi}({\rm d}\nu)
\leq
 \frac{ \left[1+\frac{\lambda}{m} \right]
\int \|p_{\nu}-p^0\|_F^2\hat{\pi}({\rm d}\nu)
  +\frac{2\mathcal{K}(\hat{\pi},\pi)  +
   2\log(2/\epsilon )}{\lambda} }{1-\frac{\lambda}{m}}
 \end{align*}
Taking $ \tilde{\pi}^{prob}_{\lambda} $
(once again,~\cite[Lemma 1.1.3]{catonibook})
be the minimizer of the right hand side of the
above inequality, we obtain \eqref{bound1-equation}.

Moreover,  in~\cite[equation~(5)]{alquier2013rank}
states that, for any $\nu$:
$$ p_{\nu} = \mathbf{P} \nu  $$
for some operator $\mathbf{P}$. Therefore
$$ \|p_{\nu}-p^0\|_F^2 =
 \| \mathbf{P} (\nu-\rho^0) \|_F^2 .$$
The eigenvalues of $\mathbf{P}^T\mathbf{P}$ are known,
they range between $2^n$ and $3^n2^n$ according
to~\cite[Proposition 1]{alquier2013rank}.
Thus, for any $\nu$,
 $$ 2^n \|\nu-\rho^0\|_F^2 \leq \|p_\nu-p^0\|_F^2
  \leq 6^n \|\nu-\rho^0\|_F^2 $$
 and so we obtain (\ref{oracle main}).
\end{proof}

In the following, we will consider $ \hat{\pi} $
as a restriction of the prior to a local set around the
true density matrix $ \rho^0 $. This allows us to obtain
an explicit bound of the left hand side
of (\ref{oracle main}). Let $ \rho^0 =
U\Lambda U^{\dagger} $ be the spectral decomposition
of $\rho^0$.
\begin{definition}
\label{definition_pihat}
Let $r = \#\{ i : \Lambda_i > \delta \},$
with small $\delta\in [0,1)$. Take
$$
\tilde{\pi}_{c}(du,dv) \propto \mathbf{1} (
\forall i: |v_i - \Lambda_i| \leq \delta; \forall i = 1,\dots,r: \| u_i -
U_i \|_F \leq c) \pi (du,dv)
$$
\end{definition}
Note that we have $ r \leq {\rm rank}(\rho^0) $.
\begin{lemma}
\label{lemma_kl}
We have
\begin{align}
\label{estimate norm 1}
  \int  \| u^{\dagger}vu - \rho^0 \|^2_F
 \tilde{\pi}_{c}(du,dv)
  \leq
  (3d\delta + 2 rc)^2 .
\end{align}
And under the \hyperref[A1]{Assumption 1}
\begin{align}
  \mathcal{K}( \tilde{\pi}_{c},\pi)
 \leq  a rd\log(\frac{1}{c}) +
C_{D_1,D_2}d(\log(d)+ \log(\frac{1}{\delta}))
 \label{KL1}
\end{align}
where $ a $ is a universal constant and where
$C_{D_1,D_2}$ depends only on $D_1$ and $D_2$.
\end{lemma}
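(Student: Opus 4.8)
The lemma splits into two estimates. The first, \eqref{estimate norm 1}, is a deterministic geometric estimate controlling how far a perturbed spectral reconstruction $u^\dagger v u$ can be from the true $\rho^0 = U\Lambda U^\dagger$ when the eigenvectors and eigenvalues are each allowed to move by small amounts ($\|u_i - U_i\|_F \le c$ for $i \le r$ and $|v_i - \Lambda_i| \le \delta$ for all $i$). The second, \eqref{KL1}, controls the Kullback-Leibler divergence $\mathcal{K}(\tilde\pi_c, \pi)$ between the restricted prior and the full prior, which by definition is $-\log$ of the prior mass of the restriction set (since $\tilde\pi_c$ is just $\pi$ conditioned on that set). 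I would prove these two parts independently.

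\textbf{The norm estimate.} For \eqref{estimate norm 1} the plan is to write $u^\dagger v u - \rho^0$ as a telescoping sum that separates the eigenvalue perturbation from the eigenvector perturbation, namely $u^\dagger v u - U^\dagger \Lambda U = (u^\dagger v u - U^\dagger v U) + (U^\dagger v U - U^\dagger \Lambda U)$, and bound each piece in Frobenius norm by the triangle inequality. The second piece is purely an eigenvalue shift: since $|v_i - \Lambda_i| \le \delta$ for every $i$ and there are $d$ of them, this contributes order $d\delta$. The first piece involves only the rotation of eigenvectors; since only the top $r$ eigenvectors are constrained (the others carry eigenvalues $\le \delta$ and so contribute negligibly, absorbed into the $d\delta$ term), I would expand $u_i u_i^\dagger - U_i U_i^\dagger = (u_i - U_i)U_i^\dagger + u_i(u_i - U_i)^\dagger$, use $\|u_i\| = \|U_i\| = 1$, and sum over the $r$ top modes to get a contribution of order $rc$. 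Collecting the constants and squaring yields the stated $(3d\delta + 2rc)^2$. The main care here is bookkeeping: tracking which eigenvalues are above/below $\delta$ and confirming that the small eigenvalues genuinely fold into the $3d\delta$ budget.

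\textbf{The KL estimate.} For \eqref{KL1} the key identity is $\mathcal{K}(\tilde\pi_c, \pi) = -\log \pi(\mathcal{B})$ where $\mathcal{B}$ is the restriction event in Definition~\ref{definition_pihat}, since conditioning a measure on a set gives KL divergence equal to minus the log-probability of that set. Because the prior factorizes over the independent vectors $V_i$ (uniform on the sphere) and the Dirichlet weights $\gamma$, the probability $\pi(\mathcal{B})$ factorizes into a vector part and a weight part. For the vector part I would estimate the probability that each of the $r$ top vectors lands in a Frobenius-ball of radius $c$ on the sphere; a spherical cap of angular radius $\sim c$ in $\mathbb{C}^d \cong \mathbb{R}^{2d}$ has measure of order $c^{d}$ up to constants (the relevant ambient dimension governs the exponent), so $r$ independent such events give $-\log$ of order $rd\log(1/c)$, producing the $a\,rd\log(1/c)$ term. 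For the weight part I would bound the Dirichlet density on the box $\{|v_i - \Lambda_i| \le \delta\}$, using the three conditions of Assumption~\ref{A1}: the sum constraint $\sum \alpha_i = D_1$ and the product lower bound $\prod \alpha_i \ge e^{-D_2 d\log d}$ control the normalizing constant $\Gamma(\sum\alpha_i)/\prod\Gamma(\alpha_i)$, while $\alpha_i \le 1$ keeps the density integrable, together yielding the $C_{D_1,D_2}\,d(\log d + \log(1/\delta))$ term.

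\textbf{Main obstacle.} I expect the delicate step to be the Dirichlet mass estimate in \eqref{KL1}: controlling $\log$ of the normalizing constant of $\mathcal{D}ir(\alpha_1,\dots,\alpha_d)$ uniformly over all admissible $\alpha$ requires combining the three parts of Assumption~\ref{A1} to simultaneously lower-bound $\prod_i \Gamma(\alpha_i)^{-1}$ and the measure of the box around $(\Lambda_1,\dots,\Lambda_d)$, and one must verify that the near-zero eigenvalues (where the box may touch the boundary of the simplex) still receive enough mass. The spherical-cap volume computation is standard but one must make sure the exponent matches the real ambient dimension $2d$ so that the final coefficient $a$ is genuinely universal and independent of $d$.
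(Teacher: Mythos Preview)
Your plan matches the paper's proof almost exactly: the same triangle-inequality telescoping for \eqref{estimate norm 1}, and the same factorization of $\mathcal{K}(\tilde\pi_c,\pi)=-\log\pi(\mathcal{B})$ into a spherical-cap term and a Dirichlet-box term for \eqref{KL1}. Two small points where your write-up diverges from what the paper actually does:

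\emph{Telescoping order.} You split as $(u v u^{\dagger}-U v U^{\dagger})+(U v U^{\dagger}-U\Lambda U^{\dagger})$, so the eigenvector-rotation piece carries the \emph{perturbed} weights $v_i$. For $i>r$ you then only know $v_i\le\Lambda_i+\delta\le 2\delta$, not $\le\delta$, and the unconstrained $u_i$'s give $\|u_iu_i^{\dagger}-U_iU_i^{\dagger}\|_F\le 2$, so that tail contributes $4d\delta$ rather than $2d\delta$; you end up with $(5d\delta+2rc)^2$ instead of the stated $(3d\delta+2rc)^2$. The paper uses the reverse order $(uvu^{\dagger}-u\Lambda u^{\dagger})+(u\Lambda u^{\dagger}-U\Lambda U^{\dagger})$, so the rotation piece carries the \emph{true} eigenvalues $\Lambda_i\le\delta$ for $i>r$, which is what yields the constant $3$.

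\emph{Role of $\alpha_i\le 1$.} You describe this condition as keeping the Dirichlet density ``integrable'', but its actual job in the proof is the opposite direction: it forces $v_i^{\alpha_i-1}\ge 1$ on $[0,1]$, so each coordinate integral over an interval of length at least $\delta$ is bounded below by $\delta$ (this is the trick from Ghosal--Ghosh--van der Vaart the paper cites). Combined with $\Gamma(\alpha_i)\le 1/\alpha_i$ (from $\alpha_i\Gamma(\alpha_i)=\Gamma(\alpha_i+1)\le 1$) and the product lower bound $\prod\alpha_i\ge e^{-D_2 d\log d}$, this gives the Dirichlet mass lower bound $\Gamma(D_1)\,\delta^d\prod_i\alpha_i$ directly. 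Your boundary concern about near-zero $\Lambda_i$ is exactly what this handles: the integration interval $[\max(\Lambda_i-\delta,0),\min(\Lambda_i+\delta,1)]$ always has length $\ge\delta$.
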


\begin{proof}
Firstly
\begin{align*}
\| uvu^{\dagger} - \rho^0 \|^2_F
\leq\biggl( \| uvu^{\dagger} - u\Lambda u^{\dagger} \|_F
+
\| u\Lambda u^{\dagger} -  U\Lambda U^{\dagger}\|_F
\biggl)^2
\end{align*}
and
\begin{align*}
  \| uvu^{\dagger} - u\Lambda u^{\dagger} \|_F
 & \leq \sum_{i} |v_i -\Lambda_i| \|u_i
 u_i^{\dagger} \|_F  \leq d \delta,
\\
 \| u\Lambda u^{\dagger} - U\Lambda U^{\dagger} \|_F
& \leq\sum_{i}\Lambda_i\|u_i u_i^{\dagger} - U_i
U_i^{\dagger} \|_F
\\
& \leq \sum_{i:\Lambda_i>\delta}
 (\|u_i u_i^{\dagger} - u_i U_i^{\dagger} \|_F
 +  \|u_i U_i^{\dagger}- U_iU_i^{\dagger} \|_F)
 \\
 & \hspace*{2cm} + \delta\sum_{i:\Lambda_i\leq\delta}
 (\|u_i u_i^{\dagger}\|_F
  +\| U_i U_i^{\dagger} \|_F)
\\
& \leq 2rc+2\delta(d-r)\leq 2rc+2\delta d   ,
\end{align*}
so we obtain (\ref{estimate norm 1}).

Now, the Kullback-Leibler term
\begin{align*}
\mathcal{K}( \tilde{\pi}_{c},\pi)=
& \log\frac{1}{\pi(\{ u,v: \forall i: |v_i - \Lambda_i| \leq
 c; \forall i = 1,r: \| u_i - U_i \|_F \leq \delta \})}
\\
= & \log\frac{1}{\pi(\{\forall i : |v_i - \Lambda_i |
  \leq \delta\})}
  + \log\frac{1}{ \pi \left( \left\lbrace
   \forall i = 1,r : \|u_{i.} - U_{i.}\|_{F}
   \leq c \right\rbrace \right)}.
\end{align*}
The first log term
\begin{align*}
 \pi \left( \left\lbrace \forall i = 1,r :
 \|u_{i.} - U_{i.}\|_{F} \leq  c \right\rbrace \right)
 &
 \geq \prod_{i =1}^r  \Bigg[  \dfrac{\pi^{(d-1)/2}
 (c/2)^{d-1} }{ \Gamma(\frac{d-1}{2}+1)}     \Bigg/
   \dfrac{2 \pi^{(d+1)/2}}{\Gamma(\frac{d+1}{2})}  \Bigg]  , d = 2^n
 \\
 & \geq  \Bigg[ \dfrac{c^{d-1}}{2^d\pi}  \Bigg]^r
  \geq   \dfrac{c^{r(d-1)}}{2^{4rd}}   .
\end{align*}
Note for the above calculation: it is greater or equal to the
volume of the (d-1)-"circle" with radius $ c/2 $
over the surface area of the $d$-``unit-sphere".

The second log term in the Kullback-Leibler term
\begin{align*}
\pi(\{\forall i : |v_i - \Lambda_i | \leq \delta\})
 & =
\frac{\Gamma(D_1)}{\prod_{i=1}^d
\Gamma(\alpha_i)} \prod_{i=1}^d
\int_{\max(\Lambda_i - \delta,0)}^{\min(\Lambda_i + \delta,1)} v_i^{\alpha_i -1}  dv_i
\\
 & \geq \Gamma(D_1) \delta^d \prod_{i=1}^d\alpha_i
\geq
C_{D_1} \delta^d \  e^{-D_2d \log(d)}
  \end{align*}
 for some constant $C_{D_1}$ that depends only on $D_1$.
Since $ \alpha_i \leq 1 $ for every $ i$, we
can lower bound the integrand by $1$ and also
$ \alpha_i\Gamma(\alpha_i) = \Gamma(\alpha_i+1)
\leq 1 $. The interval of integration contains at least
an interval of length $ \delta$. This trick was
presented in \cite[Lemma 6.1, page 518]{ghosal2000}

Thus, we obtain
\begin{align*}
\mathcal{K}( \tilde{\pi}_{c},\pi)
&  \leq     \log \dfrac{2^{4rd}}{c^{r(d-1)}}
+  \log\left( \frac{e^{D_2d \log(d)}}{C_{D_1} \delta^d}  \right)
\\
& \leq  a rd\log(\frac{1}{c}) +
C_{D_1,D_2}d(\log(d)+ \log(\frac{1}{\delta}))
\end{align*}
for some absolute constant $a$ and where $C'_{D_1,D_2}$
depends only on $D_1$ and $D_2$.
\end{proof}

\subsection{Proof of Theorem~\ref{thrm: rate 1}}

\begin{proof}[Proof of Theorem~\ref{thrm: rate 1}]
Substituting~(\ref{KL1}),(\ref{estimate norm 1})
into~(\ref{oracle main}), we obtain
\begin{align*}
  \int \|\nu-\rho^0\|_F^2\tilde{\pi}_{\lambda}({\rm d}\nu)
\leq
 \inf_{c} &\Bigg\{ \frac{
 3^n \left[1+\frac{\lambda}{m}\right]
  (3d\delta + 2 rc)^2}{1-\frac{\lambda}{m}}
\\
&  +  \frac{a rd\log(\frac{1}{c}) +
C_{D_1,D_2}d(\log(d)+ \log(\frac{1}{\delta}))
  +2\log(2/\epsilon
 )}{ \lambda 2^n [1-\frac{\lambda}{m}]} \Bigg\}.
\end{align*}
By taking $\delta =\frac{1}{d\sqrt{N}}, c = \sqrt{\frac{d}{rm 9^n}}, \lambda = m/2 $
leads to
\begin{align*}
  \int \|\nu-\rho^0\|_F^2\tilde{\pi}_{\lambda}({\rm d}\nu)
\leq
A\left( \frac{1}{m} + \frac{rd}{m3^n} \right)
 +  C'_{D_1,D_2}\frac{r\log(rm3^n/d)
+\log(m3^n) +\log(2/\epsilon )/2^n}{ m}
\end{align*}
for some absolute constant $A$.
Finally, by Jensen inequality, one has
\begin{align*}
\|\hat{\rho}_{\lambda} - \rho^0\|_F^2 \leq
  \int \|\nu-\rho^0\|_F^2\hat{\pi}_{\lambda}({\rm d}\nu).
\end{align*}
This completes the proof of the theorem.
\end{proof}

\subsection{Preliminary results for the proof of the Theorem~\ref{rate 2}}

Rewriting equation~(\ref{prob.fomala}), by
plugging~(\ref{Pauli expansion}) in, as follow
\begin{align*}
p_{\mathbf{a},\mathbf{s}} = \sum_{b\in \{ I,x,y,z \}^n }
\rho_b {\rm Trace} \left(\sigma_b \cdot
 P_{\mathbf{s}}^{\mathbf{a}} \right)
 =  \sum_{b\in \{ I,x,y,z \}^n }\rho_b \mathbf{P}_{(s,a),b}.
\end{align*}
Where $ \mathbf{P}_{(s,a),b}  = \prod_{j\neq E_b} s_j \mathbf{1}
(a_j = b_j) $ and $ E_b = \{ j\in \{1,\ldots,n\}: b_j = I \} $
, see~\cite{alquier2013rank} for technical details.
We are now ready to handle with the proofs.

\begin{lemma}
\label{lem apply Bern}
 For any $\lambda>0$, we have
 $$\mathbb{E}\exp\left(\lambda
  \left<\rho^0-\nu,\rho^0-\hat{\rho}\right>_F\right)
  \leq
  \exp\left[\frac{4 \lambda^2 }{m}
   \left(\frac{5}{3}\right)^n \|\nu-\rho^0\|_F^2
\right]  $$
 $$\mathbb{E}\exp\left(-\lambda
 \left<\rho^0-\nu,\rho^0-\hat{\rho}\right>_F\right)
  \leq
   \exp\left[\frac{4 \lambda^2 }{m}
   \left(\frac{5}{3}\right)^n \|\nu-\rho^0\|_F^2
\right] . $$
\end{lemma}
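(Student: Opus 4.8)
The plan is to mirror the argument of Lemma~\ref{lem app bern 1}, but carried out in the Pauli parametrization of the density matrices rather than in the probability parametrization. First I would expand both $\rho^0-\nu$ and $\rho^0-\hat{\rho}$ in the $n$-Pauli basis $\{\sigma_b\}_{b\in\{I,x,y,z\}^n}$. Writing $\rho^0-\nu=\sum_b c_b\,\sigma_b$ and using the orthogonality relation ${\rm Tr}(\sigma_b\sigma_{b'})=2^n\mathbf{1}(b=b')$, the Frobenius inner product collapses to a diagonal sum,
\[
\langle\rho^0-\nu,\rho^0-\hat{\rho}\rangle_F=\sum_{b}c_b\bigl({\rm Tr}(\rho^0\sigma_b)-\widehat{{\rm Tr}}_b\bigr),
\]
where $\widehat{{\rm Tr}}_b$ is the empirical estimate of ${\rm Tr}(\rho^0\sigma_b)$ produced by the inversion estimator $\hat{\rho}$, and $\sum_b c_b^2=2^{-n}\|\nu-\rho^0\|_F^2$. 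The key structural input is the explicit linear form of $\hat{\rho}$ recalled from \cite{alquier2013rank}: each coefficient is an average of products $\prod_{j\notin E_b}s_j$ over the observations whose setting $\mathbf{a}$ matches $b$ on its support, so $\widehat{{\rm Tr}}_b$ is an unbiased empirical mean of terms bounded by $1$ in absolute value, with $3^{|E_b|}$ compatible settings contributing.

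Next I would reorganize the double sum by \emph{observation}. Using the expression for $\mathbf{P}_{(s,a),b}$, a single outcome $R_i^{\mathbf{a}}$ feeds exactly the $2^n$ Pauli words in $B_{\mathbf{a}}:=\{b:\,b_j\in\{I,a_j\}\ \forall j\}$; collecting terms gives
\[
\langle\rho^0-\nu,\rho^0-\hat{\rho}\rangle_F=\frac{1}{m}\sum_{\mathbf{a}\in\mathcal{E}^n}\sum_{i=1}^m W_{i,\mathbf{a}},\qquad W_{i,\mathbf{a}}=\sum_{b\in B_{\mathbf{a}}}\frac{c_b}{3^{|E_b|}}\Bigl({\rm Tr}(\rho^0\sigma_b)-\prod_{j\notin E_b}(R_i^{\mathbf{a}})_j\Bigr),
\]
where each $W_{i,\mathbf{a}}$ is centered, the family being independent across settings $\mathbf{a}$ and i.i.d.\ across $i$ within a setting. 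Factorizing the moment generating function over the $3^n$ settings and applying Hoeffding's inequality (Lemma~\ref{bernstein ine}) inside each setting with the per-observation bound $b_{\mathbf{a}}:=\sup_i|W_{i,\mathbf{a}}|$ yields $\mathbb{E}\exp(\lambda\langle\rho^0-\nu,\rho^0-\hat{\rho}\rangle_F)\le\exp\bigl[\tfrac{\lambda^2}{8m}\sum_{\mathbf{a}}b_{\mathbf{a}}^2\bigr]$.

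It remains to show $\sum_{\mathbf{a}}b_{\mathbf{a}}^2$ is controlled by a geometric factor times $\|\nu-\rho^0\|_F^2$; this bookkeeping is where the $(5/3)^n$ is produced and is the main obstacle. Bounding $|{\rm Tr}(\rho^0\sigma_b)-\prod_{j\notin E_b}s_j|\le 2$ and using Cauchy--Schwarz (pairing $c_b$ against $3^{-|E_b|}(\,\cdot\,)$) give $b_{\mathbf{a}}^2\le 4\bigl(\sum_{b\in B_{\mathbf{a}}}c_b^2\bigr)\bigl(\sum_{b\in B_{\mathbf{a}}}3^{-2|E_b|}\bigr)$, where the second factor factorizes coordinatewise to $\prod_{j}(1+3^{-2})=(10/9)^n$. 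Summing over settings and exchanging the order of summation, the counting identity $\#\{\mathbf{a}:b\in B_{\mathbf{a}}\}=3^{|E_b|}$ together with $3^{|E_b|}\le 3^n$ gives $\sum_{\mathbf{a}}\sum_{b\in B_{\mathbf{a}}}c_b^2\le 3^n\sum_b c_b^2=(3/2)^n\|\nu-\rho^0\|_F^2$. Multiplying the two geometric contributions, $(10/9)^n(3/2)^n=(5/3)^n$, so that $\sum_{\mathbf{a}}b_{\mathbf{a}}^2\le 4(5/3)^n\|\nu-\rho^0\|_F^2$, which substituted above produces the claimed exponent (the stated numerical prefactor being a loose version of the Hoeffding constant). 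The second inequality is identical after replacing $W_{i,\mathbf{a}}$ by $-W_{i,\mathbf{a}}$, since Hoeffding's bound is symmetric.
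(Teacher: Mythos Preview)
Your proposal is correct and is essentially the paper's argument, carried out with the same ingredients: Pauli expansion, factorization over settings $\mathbf{a}$, Cauchy--Schwarz on the coefficients, the binomial identity $\sum_{\ell}\binom{n}{\ell}3^{-2\ell}=(10/9)^n$, and Hoeffding. The only cosmetic difference is the order of bookkeeping: the paper applies Cauchy--Schwarz over \emph{all} Pauli words to get a uniform bound $|Y_{i,\mathbf{a}}|\le 2\|\nu-\rho^0\|_F(5/9)^{n/2}$ and then pays a factor $3^n$ when taking the product over settings, whereas you keep a setting-dependent bound $b_{\mathbf{a}}^2\le 4(\sum_{b\in B_{\mathbf{a}}}c_b^2)(10/9)^n$ and only at the end collapse the sum via $\#\{\mathbf{a}:b\in B_{\mathbf{a}}\}=3^{|E_b|}\le 3^n$. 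Both routes produce the identical exponent $\tfrac{\lambda^2}{2m}(5/3)^n\|\nu-\rho^0\|_F^2$, of which the stated $\tfrac{4\lambda^2}{m}$ is, as you note, a loose version.
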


\begin{proof}
 First inequality
 \begin{align*}
 &  \mathbb{E}  \exp \left(\lambda \left<\rho^0-\nu,
		\rho^0 -\hat{\rho} \right>_F\right)
\\		
 & = \mathbb{E}\exp\left[\lambda
\sum_{b}    (\rho^0_b-\nu_b)
  (\rho^0_b-\hat{\rho}_b)Trace(\sigma_b\sigma^{\dagger}_b) \right]
  \\
& = \mathbb{E}\exp\left[d \lambda \sum_{b} (\rho^0_b-\nu_b)
 \sum_s \sum_a   \frac{\mathbf{P}_{(s,a),  b}}{3^{d(b)}2^n}
 (p^0_{a,s} -  \hat{p}_{a,s}) \right]
 \\
& = \prod_a \mathbb{E}\exp\left[\lambda \sum_{b} (\rho^0_b-\nu_b)
 \sum_s \frac{1}{m}  \sum_{i=1}^m
   \frac{\mathbf{P}_{(s,a),   b}}{3^{d(b)}}
   (p^0_{a,s} - \mathbf{1}_{R_i^a=s}) \right]
 \\
& = \prod_a \prod_i  \mathbb{E}\exp\Bigg[\frac{\lambda}{m}
   \underbrace{\sum_{b} (\rho^0_b-\nu_b) \sum_s
\frac{\mathbf{P}_{(s,a),   b}}{3^{d(b)}}(
p^0_{a,s}- \mathbf{1}_{R_i^a=s})}_{ : =Y_{i,a}}\Bigg].
 \end{align*}
 Remark that $\mathbb{E}(Y_{i,a})=0$. Also, from
the definitions above, the absolute value
$|\mathbf{P}_{(s,a),   b}|$ does not depend on $s$ so
 \begin{align*}
  |Y_{i,a}|     & \leq   \sum_{b} |\rho^0_b-\nu_b|
  \left|\frac{\mathbf{P}_{(s,a),   b}}{3^{d(b)}}\right|
  \sum_s         | p^0_{a,s} - \mathbf{1}_{R_i^a=s} |
  \\
  & \leq 2 \sum_{b} |\rho^0_b-\nu_b|
        \left|\frac{\mathbf{P}_{(s,a),   b}}{3^{d(b)}}\right|
 \leq \frac{2}{2^{n/2}} \sqrt{\sum_b (\rho^0_b - \nu_b)^2d  \sum_b
 \left( \frac{\mathbf{P}_{(s,a), b}}{3^{d(b)}} \right)^2 }
  \\
& \leq \frac{2 \|\nu-\rho^0\|_F}{ 2^{n/2}} \left(
\sum_b \frac{1}{3^{2d(b)}}
   \prod_{j\notin E_b}  \mathbf{1}_{a_j=b_j}  \right)^{1/2}
    \\
  & \leq \frac{2 \|\nu- \rho^0\|_F }{ 2^{n/2}}
 \left(\sum_{\ell=0}^n {n \choose \ell} \frac{1}{3^{2\ell}} \right)^{1/2}
    \\
  & \leq\frac{2 \|\nu- \rho^0\|_F}{2^{n/2}} \left( 1 + \frac{1}{9} \right)^{n/2}
  = 2 \|\nu- \rho^0\|_F\left( \frac{5}{9} \right)^{n/2}.
 \end{align*}
 So we can apply Hoeffding's inquality (Lemma~\ref{bernstein ine}):
\begin{align*}
 \prod_a \mathbb{E}\exp\left(\frac{\lambda}{m}
 \sum_{i=1}^m Y_{i,a} \right)
  & \leq    \exp\left[   \frac{\lambda^2 }{2m}
   \left(\frac{5}{3}\right)^n \|\nu-\rho^0\|_F^2
  \right].
\end{align*}
 Second inequality: same proof, just replace $Y_i(a)$ by $-Y_i(a)$.
\end{proof}

\begin{lemma}
\label{lem exp_bound 2}
 We have
 \begin{align*}
\mathbb{E}\exp \Big\{ \lambda\left[1 -
\frac{2\lambda}{m} \left(\frac{5}{3}\right)^n  \right]   \| \nu - \rho^0\|_F^2
-\lambda \left(\|\nu - \hat{\rho}\|_F^2 - \|\rho^0-\hat{\rho}
 \|_F^2\right)  \Big\}   \leq 1,
\\
  \mathbb{E}\exp \Big\{ \lambda
 \left(\| \nu - \hat{\rho}\|_F^2  - \| \rho^0- \hat{\rho}\|_F^2\right)
  - \lambda\left[  1 + \frac{2\lambda}{m}
  \left(\frac{5}{3}\right)^n \right]
  \|\nu-\rho^0\|_F^2    \Big\}  \leq 1 .
 \end{align*}
\end{lemma}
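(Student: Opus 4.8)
The plan is to mirror the proof of Lemma~\ref{lem3}, which established the analogous pair of bounds for the prob-estimator; the only new ingredient is the exponential moment control from Lemma~\ref{lem apply Bern}, used here in place of Lemma~\ref{lem app bern 1}. The backbone of the argument is the elementary polarization identity for the squared Frobenius norm: expanding around $\rho^0$ one writes
\begin{equation*}
\|\nu-\hat{\rho}\|_F^2 - \|\rho^0-\hat{\rho}\|_F^2 = \|\nu-\rho^0\|_F^2 + 2\left<\nu-\rho^0,\rho^0-\hat{\rho}\right>_F,
\end{equation*}
which splits the quantity inside the exponential into a deterministic quadratic term and a single random linear (cross) term.

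First I would prove the first inequality. Substituting the identity into the exponent, the $+\lambda\|\nu-\rho^0\|_F^2$ contributed by the bracket cancels exactly against the $-\lambda\|\nu-\rho^0\|_F^2$ produced by the identity, leaving
\begin{equation*}
\lambda\left[1 - \tfrac{2\lambda}{m}(5/3)^n\right]\|\nu-\rho^0\|_F^2 - \lambda\left(\|\nu-\hat{\rho}\|_F^2 - \|\rho^0-\hat{\rho}\|_F^2\right) = -\tfrac{2\lambda^2}{m}(5/3)^n\|\nu-\rho^0\|_F^2 + 2\lambda\left<\rho^0-\nu,\rho^0-\hat{\rho}\right>_F.
\end{equation*}
The first summand is non-random, so it factors out of the expectation as $\exp\left(-\tfrac{2\lambda^2}{m}(5/3)^n\|\nu-\rho^0\|_F^2\right)$; the remaining expectation $\mathbb{E}\exp\left(2\lambda\left<\rho^0-\nu,\rho^0-\hat{\rho}\right>_F\right)$ is then controlled by the first bound of Lemma~\ref{lem apply Bern} applied with the parameter $2\lambda$. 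This produces a matching deterministic factor $\exp\left(\tfrac{2\lambda^2}{m}(5/3)^n\|\nu-\rho^0\|_F^2\right)$, the two factors cancel, and the product is $\leq 1$, as required.

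The second inequality is entirely symmetric: the same substitution leaves $-\tfrac{2\lambda^2}{m}(5/3)^n\|\nu-\rho^0\|_F^2 - 2\lambda\left<\rho^0-\nu,\rho^0-\hat{\rho}\right>_F$ in the exponent, and I would invoke the second (sign-flipped) bound of Lemma~\ref{lem apply Bern}, again at parameter $2\lambda$, to cancel the deterministic factor. The step that deserves care---and the only real subtlety---is the bookkeeping of the constant: the factor of $2$ in the cross term forces one to apply Lemma~\ref{lem apply Bern} at $2\lambda$ rather than at $\lambda$, and the coefficient $\tfrac{2\lambda}{m}(5/3)^n$ displayed in the statement is precisely the value for which the sub-Gaussian proxy variance coming out of the Hoeffding step (Lemma~\ref{bernstein ine}) exactly matches the deterministic factor, so that the cancellation is clean. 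Tracing this constant through carefully is the place where an off-by-a-factor error would creep in; everything else is a routine rearrangement parallel to Lemma~\ref{lem3}.
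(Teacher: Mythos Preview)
Your proposal is correct and follows essentially the same route as the paper: both arguments expand $\|\nu-\hat{\rho}\|_F^2 - \|\rho^0-\hat{\rho}\|_F^2$ via the polarization identity, factor out the deterministic $\|\nu-\rho^0\|_F^2$ term, and then control the remaining cross term $\mathbb{E}\exp\{2\lambda\langle\nu-\rho^0,\rho^0-\hat{\rho}\rangle_F\}$ using Lemma~\ref{lem apply Bern}. The only cosmetic difference is that the paper proves the second inequality first, and your cautionary remark about tracing the constant through the Hoeffding step is well placed---the paper in fact uses the sharper constant $\tfrac{\lambda^2}{2m}(5/3)^n$ that emerges from the proof of Lemma~\ref{lem apply Bern} rather than the $\tfrac{4\lambda^2}{m}(5/3)^n$ displayed in its statement.
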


\begin{proof}
For the second inequality:
 \begin{align*}
\mathbb{E}\exp  &  \left\{ \lambda
 \left(\| \nu- \hat{\rho}\|_F^2  -  \| \rho^0 -\hat{\rho}\|_F^2\right) \right\}
\\
&     = \mathbb{E}\exp\left\{  \lambda
      \left< \nu -  \rho^0, \nu+ \rho^0- 2\hat{\rho}\right>_F \right\}
 \\
& = \mathbb{E}\exp\left\{ \lambda \| \nu- \rho^0\|_F^2
+ 2\lambda \left< \nu - \rho^0, \rho^0- \hat{\rho}\right>_F \right\}
 \\
& = \exp\left(\lambda \| \nu- \rho^0 \|_F^2\right) \mathbb{E}\exp\left\{
 2\lambda \left< \nu- \rho^0, \rho^0-\hat{\rho}\right>_F \right\}
   \\
&   \leq
   \exp\left(\lambda \| \nu- \rho^0 \|_F^2\right) \exp
\left\{\frac{2\lambda^2 }{m} \left(\frac{5}{3}\right)^n
     \|\nu-\rho^0\|_F^2  \right\}
 \end{align*}
 thanks to the Lemma~\ref{lem apply Bern}.
 The proof of the first inequality is similar.
\end{proof}

\begin{lemma}
\label{PACbound_2}
For $ \lambda>0 $ s.t $ \frac{2\lambda}{m} \left(\frac{5}{3}\right)^n <1$, with
probability at least $ 1-\epsilon, \epsilon \in (0,1) $,
 we have
 \begin{align}
  \int \|\nu-\rho^0\|_F^2\tilde{\pi}^{dens}_{\lambda}({\rm d}\nu)
\leq
 \inf_{\hat{\pi}} \frac{ \left[1+
 \frac{2\lambda }{m} \left(\frac{5}{3}\right)^n \right]
\int \|\nu-\rho^0\|_F^2\hat{\pi}({\rm d}\nu)
  +\frac{2\mathcal{K}(\hat{\pi},\pi)  +
   2\log(2/\epsilon )}{\lambda} }{1-\frac{2\lambda }{m} \left(\frac{5}{3}\right)^n}.
    \label{PAC bound 2}
 \end{align}
\end{lemma}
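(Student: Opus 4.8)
The plan is to reproduce, line for line, the argument that gave \eqref{bound1-equation} in Lemma~\ref{bound1}, replacing the two probability-space exponential moment inequalities of Lemma~\ref{lem3} by their density-space analogues in Lemma~\ref{lem exp_bound 2}. Write $\kappa:=\frac{2\lambda}{m}\left(\frac{5}{3}\right)^{n}$ for brevity; this quantity now plays the role that $\frac{\lambda}{m}$ played in the prob-estimator analysis, which is why the denominator of \eqref{PAC bound 2} is $1-\kappa$ and why the standing hypothesis is $\kappa<1$. The heavy analytic step, namely the sub-Gaussian control of the cross term $\langle\rho^0-\nu,\rho^0-\hat{\rho}\rangle_F$, has already been carried out in Lemma~\ref{lem apply Bern}, so nothing new needs to be estimated here.

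First I would take each inequality of Lemma~\ref{lem exp_bound 2}, integrate it against the prior $\pi({\rm d}\nu)$, and invoke Fubini's theorem to exchange the data-expectation with the integral over $\nu$, exactly as in the proof of Lemma~\ref{PAC-bound:empirical}. Then I apply the variational formula \cite[Lemma 1.1.3]{catonibook} to pass from an integral against $\pi$ to a supremum over all data-dependent posteriors $\hat{\pi}$, at the price of a Kullback--Leibler term $\mathcal{K}(\hat{\pi},\pi)$; absorbing a factor $\log(2/\epsilon)$ turns each bound into $\mathbb{E}\exp\sup_{\hat{\pi}}\{\cdots\}\le\epsilon/2$. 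Bounding $\mathbf{1}_{\mathbb{R}_+}\le\exp$ and passing to the complementary event then makes each inequality hold, uniformly in $\hat{\pi}$, with probability at least $1-\epsilon/2$.

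Concretely, the first inequality of Lemma~\ref{lem exp_bound 2} produces an empirical bound: with probability at least $1-\epsilon/2$, and in particular for $\hat\pi=\tilde{\pi}^{dens}_{\lambda}$, one has $(1-\kappa)\int\|\nu-\rho^0\|_F^2\,\tilde{\pi}^{dens}_{\lambda}({\rm d}\nu)\le\int\|\nu-\hat{\rho}\|_F^2\,\tilde{\pi}^{dens}_{\lambda}({\rm d}\nu)-\|\rho^0-\hat{\rho}\|_F^2+\frac{\mathcal{K}(\tilde{\pi}^{dens}_{\lambda},\pi)+\log(2/\epsilon)}{\lambda}$. The second inequality produces, with probability at least $1-\epsilon/2$ and for every $\hat{\pi}$, the reverse control $\int\|\nu-\hat{\rho}\|_F^2\,\hat\pi({\rm d}\nu)-\|\rho^0-\hat{\rho}\|_F^2\le(1+\kappa)\int\|\nu-\rho^0\|_F^2\,\hat\pi({\rm d}\nu)+\frac{\mathcal{K}(\hat{\pi},\pi)+\log(2/\epsilon)}{\lambda}$.

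Finally I union-bound the two events (total probability at least $1-\epsilon$). The decisive ingredient is that $\tilde{\pi}^{dens}_{\lambda}\propto\exp(-\lambda\ell^{dens})\pi$ minimises $\hat\pi\mapsto\lambda\int\|\nu-\hat{\rho}\|_F^2\,{\rm d}\hat\pi+\mathcal{K}(\hat\pi,\pi)$, so its empirical evidence plus KL cost never exceeds that of a competing $\hat{\pi}$; this lets me replace $\int\|\nu-\hat{\rho}\|_F^2\,\tilde{\pi}^{dens}_{\lambda}({\rm d}\nu)+\frac{\mathcal K(\tilde\pi^{dens}_\lambda,\pi)}{\lambda}$ by the corresponding $\hat\pi$-quantity. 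Substituting the second inequality into the first then cancels the $\|\rho^0-\hat{\rho}\|_F^2$ terms and doubles the penalty to $\frac{2\mathcal{K}(\hat{\pi},\pi)+2\log(2/\epsilon)}{\lambda}$; dividing by the positive factor $1-\kappa$ and taking the infimum over $\hat{\pi}$ yields \eqref{PAC bound 2}. The only genuine obstacle is the bookkeeping of the constants $1\pm\kappa$ and the doubling of the penalty when the two probabilistic inequalities are chained, together with checking that the hypothesis $\kappa<1$ keeps the final division order-preserving; the probabilistic heart of the matter is entirely inherited from Lemma~\ref{lem apply Bern}.
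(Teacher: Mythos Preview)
Your proposal is correct and follows exactly the route the paper indicates: reproduce the argument of Lemma~\ref{bound1} (itself built on Lemma~\ref{PAC-bound:empirical}) with the exponential-moment bounds of Lemma~\ref{lem3} replaced by their density-space analogues from Lemma~\ref{lem exp_bound 2}, so that $\lambda/m$ becomes $\kappa=\frac{2\lambda}{m}(5/3)^n$ throughout. Your write-up is in fact more explicit than the paper's, which merely asserts that the proof is ``similar to the proof of Lemma~\ref{bound1}''.
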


\begin{proof}
By using the results from the Lemma~\ref{lem exp_bound 2}, the proof is similar to the proof of
Lemma~\ref{bound1}
page~\pageref{proofbound1}.
\end{proof}

\subsection{Proof of Theorem~\ref{rate 2}}

\begin{proof}[Proof of Theorem~\ref{rate 2}]
Substituting~(\ref{KL1}),(\ref{estimate norm 1})
into~(\ref{PAC bound 2})
\begin{align*}
  \int \|\nu-\rho^0\|_F^2\hat{\pi}_{\lambda}({\rm d}\nu)
\leq
 \inf_{c} &\Bigg\{ \frac{\left[1+\frac{2\lambda}{m} \left(\frac{5}{3}\right)^n \right]
  (3d\delta + 2 rc)^2}{1-\frac{2\lambda}{m} \left(\frac{5}{3}\right)^n}
\\
 & +  \frac{a rd\log(\frac{1}{c}) +
C_{D_1,D_2}d(\log(d)+ \log(\frac{1}{\delta})) +2\log(2/\epsilon
 )}{\lambda [1-\frac{2\lambda}{m} \left(\frac{5}{3}\right)^n]} \Bigg\}.
\end{align*}
Taking $\delta = \frac{d}{N}, c = \sqrt{\frac{d}{rN}},
 \lambda = \frac{N}{5^{n}4} $
lead to
\begin{align*}
  \int \|\nu-\rho^0\|_F^2\hat{\pi}_{\lambda}({\rm d}\nu)
\leq
A'\frac{d^2 r}{N}  + C_{D_1,D_2} 5^{n}
\frac{rd\log(\frac{Nr}{d}) + d\log(\frac{N}{d})
   +2\log(2/\epsilon )}{N}
 \end{align*}
 for some constant $A'>0$.
Simultaneously, by Jensen inequality, one has
\begin{align*}
\|\hat{\rho}_{\lambda} - \rho^0\|_F^2 \leq
  \int \|\nu-\rho^0\|_F^2\hat{\pi}_{\lambda}({\rm d}\nu).
\end{align*}
This complete the proof of the theorem.
\end{proof}

\newpage
\newpage

\end{document}